\documentclass[reqno,a4paper]{amsart}
    \usepackage{amsmath, amsfonts, amssymb, amsthm, mathtools, etoolbox,
            enumitem, url, tikz-cd, centernot, xcolor}
    \usepackage{centernot}
\usepackage{makecell}
\usepackage{xcolor}
\usepackage{dsfont}
\usepackage{url}
\usepackage[all]{xy}
\usepackage[utf8]{inputenc}
\usepackage{wrapfig,textcomp,csquotes}
\usepackage[colorlinks=true, linkcolor=blue, citecolor=blue, urlcolor=blue, breaklinks=true]{hyperref}
\usepackage[capitalise]{cleveref}
\usepackage{todonotes}
\usetikzlibrary{positioning}
\usetikzlibrary{shapes,arrows.meta,calc}
\usetikzlibrary{arrows}

\newtheorem{theorem}{Theorem}[subsection]

\newtheorem{Theorem}[theorem]{Theorem}
\theoremstyle{definition}
\newtheorem{definition}[theorem]{Definition}
\newtheorem{remark}[theorem]{Remark}
\newtheorem{example}[theorem]{Example}
\newtheorem{corollary}[theorem] {Corollary}
\newtheorem*{acknow}{\textup{Acknowledgement}}

\newcolumntype{x}[1]{>{\centering\arraybackslash}p{#1}}

\numberwithin{equation}{section}

\newcommand{\id}{\mathrm{id}}

\newcommand{\AffiliationsBlock}{%
  \vspace{0.8em}
  \begingroup
  \centering
  \footnotesize\itshape
  $^{a}$ Department of Mathematics, Indian Institute of Technology Madras, \\ 
  Chennai-600036, Tamil Nadu, India.\\\vspace{2mm}
  $^{b}$ The Institute of Mathematical Sciences, A CI of Homi Bhabha National Institute,\\
  Chennai-600113,  Tamil Nadu, India.\par
  \endgroup
  \vspace{-1.3em}
}\makeatletter
\patchcmd{\@maketitle}{%
  \ifx\@empty\@dedicatory
  \else
}{%
  \AffiliationsBlock
  \vspace{1.5em}
  \ifx\@empty\@dedicatory
  \else
}{}{\message{Patch failed!}}
\makeatother

\begin{document}
\title{A Precise Treatment of Soft Quotient Topology and Soft Covering Maps}
\author[S. Mandal]{Souvik Mandal$^{a,*}$}
\author[A. Sarkar]{Ankur Sarkar$^{b}$}
\thanks{\hspace{-1.1em}* Corresponding author.}
\thanks{\raggedright\hspace{0.53em}
  \makebox[8.5em][l]{\textit{E-mail addresses:}}%
  \begin{minipage}[t]{0.75\textwidth}
    \texttt{ma22d014@smail.iitm.ac.in}, \texttt{ssouvik.xyz@gmail.com} (S. Mandal); \\
    \texttt{ankurimsc@gmail.com} (A. Sarkar).
  \end{minipage}}


\begin{abstract}
    We develop a foundational theory of soft quotient topology, providing a systematic approach to quotient constructions in soft topological spaces. We establish the universal property of soft quotient topology and investigate the relationship between global soft topology and its parametric slices, noting that slice-wise quotient behaviour is not sufficient to characterise soft quotients. The usefulness of the framework is illustrated through the construction of soft quotient spaces, together with a study of soft group actions and their orbit spaces. We propose a precise definition of soft covering maps that resolves inconsistencies found in the existing literature. Finally, to illustrate the applicability of our framework, we discuss a potential application in multi-agent motion planning, showing how it can significantly reduce combinatorial complexity under parametric uncertainty.
\end{abstract}
\maketitle
\vspace{-0.5em}
\begin{center}
\begin{minipage}{0.845\textwidth}
    \footnotesize
    \begin{list}{}{%
        \leftmargin=5.5em 
        \labelwidth=5.5em
        \labelsep=0pt \parsep=0pt \topsep=0pt \itemsep=0pt
    }
        \item[\textit{Keywords:}\hfill] Soft Topology, Soft Quotient Space, Soft Adjunction Space,  Soft Group action, Soft Covering map, Motion Planning.
    \end{list}

    \vspace{4pt} 

    \begin{list}{}{%
        \leftmargin=18.5em 
        \labelwidth=18.5em
        \labelsep=0pt \parsep=0pt \topsep=0pt \itemsep=0pt
    }
        \item[2020\hspace{1mm}\textit{Mathematics Subject Classification:}\hfill] Primary 54A40, 57M10;\\ Secondary 03E72, 54B15, 54B17, 54C10.
    \end{list}
\end{minipage}
\end{center}
\vspace{1em}
\section{Introduction}
Soft set theory, introduced by Molodtsov \cite{Molodtsov1999SoftSets}, provides a parameter-dependent framework for modelling uncertainty, where each parameter determines a classical subset of a universe. Building on this idea, soft topology has been developed by several authors, notably Shabir and Naz \cite{ShabirNaz2011SoftTopo}, Cagman, Karatas and Enginoglu \cite{agman2011SoftT}. These works establish the basic notions of soft open sets, soft continuity, and slice topologies. A fundamental feature of soft topology is that global properties of a soft space need not be reflected at the level of individual slices, motivating a genuinely parametric reformulation of classical topological concepts. 

In classical topology, quotient maps arise naturally in the construction of adjunction spaces, cones, suspensions, and orbit spaces, and admit several equivalent characterisations, including descriptions via universal properties and conditions involving open sets \cite{MunkresTopology,armstrong2013basic}. Despite their importance, a systematic theory of quotient maps in soft topology has not been developed. The only related work is the notion of fuzzy soft quotient topology introduced in \cite{Haripamyu2022FuzzySoftQuotient}, which is based on fuzzy soft sets and slicewise membership functions, and does not address parametric maps, universal properties, or geometric constructions in the soft topological setting. 

The aim of this paper is to develop a comprehensive theory of \emph{soft quotient maps} and the associated \emph{soft quotient topology}. We establish a soft analogue of the universal mapping property and investigate the relationship between soft quotient maps and their slice restrictions. In particular, we show that slicewise quotient behaviour does not, in general, determine soft quotient behaviour, and we identify slice-determined soft topologies as the precise setting in which these notions coincide.

This framework enables the construction of soft analogues of classical quotient spaces, including soft gluing spaces, cones, and suspensions, and reveals parameter-dependent phenomena such as the appearance of distinct classical homotopy types on different slices arising from a single soft quotient. 

We introduce the notion of soft group actions and analyze the associated soft orbit spaces. We show that canonical orbit projections are soft open maps and introduce the notion of \emph{properly soft discontinuous action} of a group on a soft topological space. We examine its relationship with classical proper discontinuity on slices and obtain sufficient conditions ensuring proper discontinuity in the soft sense. 

We address a formal inconsistency in the definition of soft covering maps proposed in \cite{saleem2024soft}. We provide a corrected formulation, establish its connection with properly discontinuous soft actions, and obtain a partial converse under a freeness assumption. We also connect the notion of soft covering map with the framework of soft quotient topology developed in this paper. 

Finally, to illustrate a potential use case our framework , we address multi-agent motion planning under parametric environmental uncertainty. By modeling the configuration space as a parameterized soft set, the soft orbit space projection reduces the search space by a factor of $N!$ for $N$ identical agents. Crucially, our soft covering maps guarantee that this compressed space preserves local topological structures, completely avoiding topological distortion and phantom collisions.
\subsection*{Organisation of the paper}
This paper is organised as follows. Section~\ref{sec:prelim} reviews the basic preliminaries on soft sets and soft topological spaces. Section~\ref{sec:soft-quotient-maps} develops the theory of soft quotient maps, including their fundamental properties, slice behaviour, characterisations, and the universal property. Section~\ref{sec:examples} develops key examples, including gluing constructions, soft cones, and soft suspensions and analyse their slice-wise classical counterparts. Section~\ref{sec:soft-actions} studies soft orbit spaces and properly soft discontinuous group actions and their properties. Section~\ref{sec:coversoft} deals with soft covering spaces and their relationship with properly soft discontinuous actions. Finally, Section~\ref{application} applies this theoretical framework to multi-agent motion planning under environmental uncertainty, demonstrating a mathematically justified symmetry reduction.

\section{Preliminaries on soft topological spaces}
\label{sec:prelim}
We recollect the basic notions of soft set theory and soft topology \cite{Molodtsov1999SoftSets,ShabirNaz2011SoftTopo,BahredarKouhestaniPassandidehFundamentalGroup,agman2011SoftT}. Throughout, $X$ denotes a non-empty set (the universe) and $A$ a non-empty set of parameters. We denote the power set of $X$ by $\mathcal{P}(X)$. 
\subsection{Soft sets}
We recall the foundational definitions and standard operations on soft sets that will be used throughout the paper.
\begin{definition}\cite{Molodtsov1999SoftSets}
Let $X$ be a common universe and $A$ a set of parameters. The collection of all soft sets over $X$ with respect to $A$ is $SS(X,A) = \{(F,A)\hspace{1mm}|\hspace{1mm} F:A\to\mathcal{P}(X)\}$. Each pair $(F,A)\in SS(X,A)$ is called a \emph{soft set} over $X$ (with respect to $A$).
\end{definition}

We recall some standard terminology following \cite{maji2003soft,agman2011SoftT,hida2014comparison}:
\begin{enumerate}[noitemsep,topsep=0pt,parsep=0pt]
\item Let $(F,A)$ and $(G,B)$ be soft sets over a universe $X$. We say that $(F,A)$ is a \emph{soft subset} of $(G,B)$, written \((F,A)\sqsubseteq(G,B)\), if $A \subseteq B$ and $F(a) \subseteq G(a) \text{ for all } a\in A$.
\item $(F,A)$ and $(G,A)$ are \emph{soft equal} if each is a soft subset of the other.
\item The \emph{soft complement} of $(F,A)$ is $(F^c,A)$, where $F^c(a)=X\setminus F(a)$ for each $a\in A$.
\item The \emph{null soft set} $(0,A)$ and the \emph{absolute soft set} $(1,A)$ are defined by $0(a)=\emptyset$ and $1(a)=X$ for all $a\in A$.
\item For any subset $S\subseteq X$, we denote by $\widetilde{S}$ the soft set $(F,A)$ defined by $F(a)=S \text{ for every } a\in A$.
\item For a point $x\in X$ and a soft set $(F,A)$ on $X$, we write $x \widetilde\in (F,A)$ if $x \in F(a)$ for every $a \in A$. Note that, for any $x\in X$, $x\centernot{\widetilde{\in}} (F,A)$, if $x\notin F(a)$ for some $a\in A$.
\item Given a family $\{(F_i,A)\}_{i\in I}\subseteq SS(X,A)$, their soft union $\bigsqcup_{i\in I}(F_i,A)$ and soft intersection $\sqcap_{i\in I}(F_i,A)$ are defined by 
\[
\Bigl(\bigsqcup_{i\in I}F_i\Bigr)(a)=\bigcup_{i\in I}F_i(a) \text{ and }
\Bigl(\sqcap_{i\in I}F_i\Bigr)(a)=\bigcap_{i\in I}F_i(a),
\] respectively, for all $a\in A$.
\end{enumerate}

\subsection{Soft topological spaces}
We recall the axioms for soft topological spaces and review the notion of parametric slice topologies.
\begin{definition}\cite{ShabirNaz2011SoftTopo,agman2011SoftT}
A \emph{soft topology} on $X$ (with respect to $A$) is a subfamily $\tau\subseteq SS(X,A)$ such that:
\begin{enumerate}[label=(\roman*),noitemsep,topsep=0pt,parsep=0pt]
\item $(0,A),(1,A)\in\tau$;
\item if $(F_i,A)\in\tau$ for all $i\in I$, then $\bigsqcup_{i\in I}(F_i,A)\in\tau$;
\item if $(F_1,A), (F_{2},A),\dots,(F_n,A)\in\tau$ for some $n\in\mathbb{N}$, then $(F_1,A)\sqcap (F_2,A)\sqcap \cdots \sqcap (F_n,A)\in\tau$.
\end{enumerate}
The triple $(X,\tau,A)$ is called a \emph{soft topological space}, and the members of $\tau$ are called \emph{soft open sets}. A soft set is called \emph{soft closed} if its complement is soft open.
\end{definition}
\begin{itemize}[noitemsep,topsep=0pt,parsep=0pt]
\item[(a)] For a soft space $(X,\tau,A)$ and a fixed parameter $a\in A$, the family $\tau_a=\{F(a) : (F,A)\in\tau\}$ is a (classical) topology on $X$, called the \emph{$a$-slice topology}. We denote the corresponding topological space $(X,\tau_a)$ by $X_a$.
\item[(b)] If the soft topology $\tau$ satisfies 
\[
(F,A)\in \tau_{X}
\quad\iff\quad
F(a)\in \tau_{a} \ \text{for all } a\in A,
\]then we refer to $\tau$ as a \emph{slice-determined soft topology}. Note that a slice–determined soft topology need not be unique. For instance, let $(X,\mathcal{T})$ be a topological space. We may define two slice–determined soft topologies $\tau_{1}$ and $\tau_{2}$ on $X$ as follows: \[
\tau_{1}=\{(F,A) : F(a)\in\mathcal{T}\ \text{for all } a\in A\},
\qquad
\tau_{2}=\{(\lambda_{U},A) : U\in\mathcal{T}\},
\] where $\lambda_{U}(a)=U$ for every $a\in A$. It is immediate that $\tau_{2}\subset \tau_{1}$, and both are slice–determined soft topologies on $X$, with each of their slices coinciding with $\mathcal{T}$.
\item[(c)] For $x\in X$, a soft set $(F,A)$ on $X$ is called a \emph{soft neighborhood} of $x$ if $x\widetilde{\in}(F,A)$ and $(F,A)\in\tau$.
\item[(d)] We say a subset $S$ of $X$ is soft open (resp. soft closed) in $(X,\tau,A)$ if the soft set $\widetilde{S}\in\tau$ (resp. $\notin\tau)$.
\end{itemize}

\subsection{Soft continuity}
We recall the notion of soft continuity from \cite{BahredarKouhestaniPassandidehFundamentalGroup}. For this, we first review soft maps between soft sets.
\begin{definition}\label{softlevel}
Let $SS(X,A)$ and $SS(Y,B)$ be families of soft sets. Let $f:X\to Y$ be a map and $e:A\to B$ a map of parameter sets (called a \emph{parametric map}). Define $\phi_{f,e}:SS(X,A)\rightarrow SS(Y,B)$ by assigning to $(F,A)$ the soft set $(G,B)$ with 
\[
G(b)=
\begin{cases}
\displaystyle\bigcup_{a\in e^{-1}(b)} f(F(a)), & e^{-1}(b)\neq\emptyset,\\[0.3em]
\emptyset, & e^{-1}(b)=\emptyset.
\end{cases}
\]
The \emph{inverse soft image} of a soft set $(G,B)$ under $\phi_{f,e}$ is the soft set $\phi^{-1}_{f,e}(G,B)=(F,A)$ with $F(a)=f^{-1}(G(e(a)))$ for all $a\in A$.
\begin{remark}\cite{BahredarKouhestaniPassandidehFundamentalGroup}
If the map $e$ is a bijection, then the followings hold:
\begin{enumerate}[noitemsep,topsep=0pt,parsep=0pt]
    \item $G(b)=f(F(e^{-1}(b))),\hspace{1mm}\forall b\in B $.
\item If $f$ is a bijection then $\phi_{f,e}=(\phi_{f^{-1},e^{-1}})^{-1}$.
\end{enumerate}
\end{remark}
\end{definition}

\begin{definition}
Let $(X,\tau_X,A)$ and $(Y,\tau_Y,B)$ be soft topological spaces and $e:A\to B$ a parametric map. A map $f:X\to Y$ is called \emph{soft $e$-continuous} if for every soft open set $(G,B)\in\tau_Y$ we have $\phi^{-1}_{f,e}(G,B)\in\tau_X$.
\end{definition}
\begin{definition}
    Let $(X,\tau_X,A)$ and $(Y,\tau_Y,B)$ be soft topological spaces and $e:A\to B$ a bijective parametric map. A bijective map $f:X\to Y$ is called \emph{soft $e$-homeomorphism} if $f$ and $f^{-1}$ are soft $e$-continuous and soft $e^{-1}$-continuous respectively.
\end{definition}

\subsection{Subspaces and products}
In this subsection, we recall the basic constructions of soft subspaces and soft products.
\begin{definition}\cite{ShabirNaz2011SoftTopo}
 \hspace{1mm}Let $(X,\tau_X,A)$ be a soft topological space and $S\subseteq X$ non-empty. The \emph{soft subspace} $(S,\tau_S,A)$ is defined by $\tau_S=\{(F_S,A):(F,A)\in\tau_X\},$ where $F_S(a)=F(a)\cap S$ for each $a\in A$. Equivalently, $\tau_{S}=\tau_{X}\sqcap\widetilde{S}$.
\end{definition}

\begin{definition}\cite{hida2014comparison}
 Let $(X,\tau_X,A)$ and $(Y,\tau_Y,B)$ be soft topological spaces. Given soft sets $(F,A)\in SS(X,A)$ and $(G,B)\in SS(Y,B)$, their \emph{rectangular soft product} is the soft set $(F,A)\boxtimes(G,B)=(H,A\times B)$, where $H(a,b)=F(a)\times G(b)$ for all $(a,b)\in A\times B$. The \emph{soft product} of $(X,\tau_X,A)$ and $(Y,\tau_Y,B)$ is the soft space
\[
(X,\tau_X,A)\times(Y,\tau_Y,B)
:= (X\times Y,\tau_{X\times Y},A\times B),
\]
where $\tau_{X\times Y}$ is the soft topology on $X\times Y$ generated by $\{(F,A)\boxtimes (G,B)\colon (F,A)\in\tau_{X}, (G,B)\in\tau_{Y}\}$.
\end{definition}

\begin{remark}\label{proj}
The canonical projections $\pi_X:X\times Y\to X$ and $\pi_Y:X\times Y\to Y$ are soft $p_A$-continuous and soft $p_B$-continuous, respectively, where $p_A:A\times B\to A,\; p_A(a,b)=a$, and $p_B:A\times B\to B,\; p_B(a,b)=b$.
\end{remark}

\subsection*{Convention} Throughout the rest of the paper we adopt the following convention.
\begin{enumerate}[noitemsep,topsep=0pt,parsep=0pt]
    \item Let $(X,\tau_{X},A)$ and $(Y,\tau_{Y},B)$ be soft topological spaces, let $e\colon A \to B$ be a parametric map, and let $f\colon X \to Y$ be a map of the underlying universes. Whenever no confusion arises, the pair $(f,e)\colon (X,\tau_{X},A) \to (Y,\tau_{Y},B)$ is termed a \emph{soft map}.
    \item We say that the pair $(f,e)$ is soft continuous whenever the map $f$ is soft $e$-continuous. The same convention is adopted for all other ``soft properties" of $f$.
    \item The map $\phi_{f,e}$ introduced in Definition~\ref{softlevel} is referred to as the \emph{induced map between soft sets}.
\end{enumerate}

\section{Soft quotient maps and soft quotient topology}
\label{sec:soft-quotient-maps}
In this section, we introduce soft quotient maps and the related soft quotient topology, examining their fundamental properties. We analyze the relationship between soft quotient maps and their slice-wise counterparts. Additionally, we establish the universal property of soft quotient spaces and use it in subsequent arguments.

\subsection{Soft \texorpdfstring{$e$-quotient}{e-quotient} maps}
Let $(X,\tau_X,A)$ and $(Y,\tau_Y,B)$ be soft topological spaces. We say that a soft map $(q,e):(X,\tau_X,A)\to(Y,\tau_Y,B)$ is \emph{soft surjective} if the induced map $\phi_{f,e}\colon SS(X,A)\rightarrow SS(Y,B)$ is \emph{surjective}.
\begin{remark} 
It is easy to observe that the induced map $\phi_{q,e}$ is surjective if and only if both the underlying map $q \colon X \to Y$ and the parametric map $e \colon A \to B$ are surjective.
\end{remark}
\begin{definition}
We say a soft surjective map $(q,e):(X,\tau_X,A)\to(Y,\tau_Y,B)$ to be a soft $e$-quotient map if, a soft set $(G,B)$ over $Y$ is soft open if and only if its inverse soft image $\phi^{-1}_{q,e}(G,B)$ over $X$ is soft open. An equivalent condition is to require that a soft set $(F,B)$ over $Y$ is soft closed if and only if its inverse soft image $\phi^{-1}_{q,e}(F,B)$ over $X$ is soft closed.
\end{definition}
\begin{Theorem}\label{prop3.3}
Let $(X,\tau_X,A)$ be a soft space and $q:X\to Y$ a surjective map onto a set $Y$ together with a surjective parametric map $e:A\to B$. Then,
\[\tau_{q,e}=\{(G,B)\in SS(Y,B):\phi^{-1}_{q,e}(G,B)\in\tau_X\}\]
is a soft topology on $Y$, and with this topology $(q,e):(X,\tau_X,A)\to(Y,\tau_{q,e},B)$ is a soft quotient map.
\end{Theorem}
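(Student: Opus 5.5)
The plan is to check the three soft topology axioms for $\tau_{q,e}$ directly. The single tool is that the inverse soft image $\phi^{-1}_{q,e}$ commutes with soft unions, finite soft intersections, and constants. I will first record this as a preliminary observation.

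For any family $\{(G_i,B)\}_{i\in I}$ and any $a\in A$,
\[
\phi^{-1}_{q,e}\Bigl(\bigsqcup_{i\in I}(G_i,B)\Bigr)(a)=q^{-1}\Bigl(\bigcup_{i\in I}G_i(e(a))\Bigr)=\bigcup_{i\in I}q^{-1}\bigl(G_i(e(a))\bigr),
\]
so $\phi^{-1}_{q,e}\bigl(\bigsqcup_i(G_i,B)\bigr)=\bigsqcup_i\phi^{-1}_{q,e}(G_i,B)$. The same computation with $\bigcap$ in place of $\bigcup$ gives the analogous identity for soft intersections, since set-theoretic preimages commute with both operations. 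In addition, $\phi^{-1}_{q,e}(0,B)=(0,A)$ and $\phi^{-1}_{q,e}(1,B)=(1,A)$, because $q^{-1}(\emptyset)=\emptyset$ and $q^{-1}(Y)=X$ pointwise in $a$.

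With this in hand the axioms follow from those of $\tau_X$:
\begin{enumerate}[label=(\roman*),noitemsep,topsep=0pt,parsep=0pt]
\item $(0,B),(1,B)\in\tau_{q,e}$, since their inverse soft images are $(0,A),(1,A)\in\tau_X$.
\item If every $(G_i,B)\in\tau_{q,e}$, then the inverse image of their soft union is $\bigsqcup_i\phi^{-1}_{q,e}(G_i,B)$, a soft union of members of $\tau_X$, hence in $\tau_X$.
\item Finite soft intersections are handled the same way, using axiom (iii) of $\tau_X$.
\end{enumerate}
So $\tau_{q,e}$ is a soft topology on $Y$ with respect to $B$.

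For the quotient claim, soft surjectivity of $(q,e)$ comes from the Remark preceding the definition of soft $e$-quotient map, since $q$ and $e$ are both surjective. The required biconditional, that $(G,B)$ is soft open in $Y$ if and only if $\phi^{-1}_{q,e}(G,B)\in\tau_X$, is exactly the defining property of $\tau_{q,e}$, so it holds tautologically.

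There is no real obstacle here. The only points needing care are that the inverse soft image is computed slice-by-slice via $F(a)=q^{-1}(G(e(a)))$, so the forward-image formula with unions over $e^{-1}(b)$ never enters, and that surjectivity is used only for the soft surjectivity requirement.
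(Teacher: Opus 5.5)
Your proposal is correct and follows the same route as the paper. The paper calls the axiom check routine, appealing to the behaviour of inverse soft images, and notes that the quotient biconditional holds by construction. You write out the slicewise identities it leaves implicit: $\phi^{-1}_{q,e}\bigl(\bigsqcup_i (G_i,B)\bigr)=\bigsqcup_i \phi^{-1}_{q,e}(G_i,B)$, its finite-intersection analogue, and $\phi^{-1}_{q,e}(0,B)=(0,A)$, $\phi^{-1}_{q,e}(1,B)=(1,A)$. You also explicitly cite the Remark for soft surjectivity, which the definition of a soft quotient map requires and the paper's proof passes over.
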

\begin{proof}
The verification that $\tau_{q,e}$ satisfies the axioms of a soft topology is routine and follows directly from the corresponding properties of inverse soft images and the fact that $\tau_X$ is a soft topology. By construction, $(G,B)\in\tau_{q,e} \iff \phi^{-1}_{q,e}(G,B)\in\tau_X,$ and hence $q$ is a soft $e$-quotient map.
\end{proof}
\begin{remark}
    The topology $\tau_{q,e}$ is the unique topology on $Y$ with respect to which
    
    $(q,e)\colon (X,\tau_{X},A)\rightarrow(Y,\tau_{q,e},B)$ is a soft quotient map.
\end{remark}

\begin{definition}\label{quotient_soft}
Let $(X,\tau_X,A)$ be a soft topological space, and let $q:X\to Y$ be a surjective map onto a set $Y$. Let $e:A\to B$ be a surjective map onto a parameter set $B$. The soft topology $\tau_{q,e}$ on $Y$, defined in Theorem~\ref{prop3.3}, is referred to as the \emph{soft quotient topology} on $Y$ induced by the map $(q,e)$.
\end{definition}

\subsection{Slice-wise description}
Although it is natural to expect that the restriction of a soft quotient map to each slice should be a classical quotient map, this need not hold in general. The following Example~\ref{softquotintnotslicewise} shows that a soft quotient map may fail to induce classical quotient maps on its slices.
\begin{example}\label{softquotintnotslicewise}
Let $A=\{a,b\}$ and $B=\{c\}$, and let $X=Y=\{0,1\}$. Define $q:X\to Y$ to be the identity map and let $e:A\to B$ be given by $e(a)=e(b)=c$.

Define a soft topology $\tau_X$ on $(X,A)$ by declaring that a soft set $(F,A)$ over $X$ is soft open if and only if $F(b)=X$ together with the null soft set $(0,A)$. Its slice topologies are $(\tau_X)_a=\mathcal{P}(X)$ and $(\tau_X)_b=\{\emptyset,X\}.$ 

Equip $(Y,B)$ with the soft topology $\tau_Y=\{(0,B),\ (Y,B)\}$, so that $(\tau_Y)_c=\{\emptyset,Y\}$ is the indiscrete topology. A direct verification shows that $q:X\to Y$ is a soft $e$-quotient map. However, the slice map $q_a:X_a\to Y_c$ is the identity from a discrete space to an indiscrete space and hence fails to be a classical quotient map.
\end{example} 
In contrast, under suitable conditions on the soft topology of the domain, a soft quotient map induces classical quotient maps on the slices. This is established in the following theorem.
\begin{Theorem}\label{prop:slice-quotient-general}
Let $(X,\tau_X,A)$ and $(Y,\tau_Y,B)$ be soft topological spaces, and let $(q,e):(X,\tau_X,A)\to (Y,\tau_Y,B)$ be a soft quotient map. If the soft topology $\tau_{X}$ is a slice-determined soft topology then, for each parameter $a\in A$, the slice map $q_a : X_a \rightarrow Y_{e(a)}$ with $q_a(x)=q(x),$ is a quotient map with respect to the slice topologies $(\tau_{X})_{a}$ on $X_{a}$ and $(\tau_{Y})_{e(a)}$ on $Y_{e(a)}$.
\end{Theorem}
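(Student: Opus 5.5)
The plan is to check the two defining properties of a classical quotient map for $q_a\colon X_a\to Y_{e(a)}$: surjectivity together with continuity, and then the converse implication ``$q_a^{-1}(V)$ open $\Rightarrow$ $V$ open''. Surjectivity is immediate, since soft surjectivity of $(q,e)$ forces $q$ to be surjective by the remark preceding the definition of soft $e$-quotient maps.

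\emph{Continuity.} Let $V\in(\tau_Y)_{e(a)}$. By definition of the slice topology, $V=G(e(a))$ for some $(G,B)\in\tau_Y$. Since $(q,e)$ is soft $e$-quotient, $\phi^{-1}_{q,e}(G,B)=(F,A)\in\tau_X$, where
\[
F(a)=q^{-1}(G(e(a)))=q^{-1}(V).
\]
Hence $q_a^{-1}(V)\in(\tau_X)_a$. This step uses neither slice-determinacy nor any property of $e$.

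\emph{Converse direction.} Suppose $q^{-1}(V)\in(\tau_X)_a$. The natural move is to build a soft set $(G,B)$ over $Y$ with $G(e(a))=V$ and $G(b')=\emptyset$ for $b'\neq e(a)$. Its inverse image $(F,A)=\phi^{-1}_{q,e}(G,B)$ has
\[
F(a')=q^{-1}(V)\ \text{ if } e(a')=e(a),\qquad F(a')=\emptyset\ \text{ otherwise}.
\]
Slice-determinacy of $\tau_X$ would then give $(F,A)\in\tau_X$, provided every slice $F(a')$ is open in $(\tau_X)_{a'}$. The quotient property would give $(G,B)\in\tau_Y$, and so $V\in(\tau_Y)_{e(a)}$.

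\emph{Main obstacle.} We must show $q^{-1}(V)\in(\tau_X)_{a'}$ for every $a'\in e^{-1}(e(a))$, but the hypothesis only gives this for $a'=a$. When $e$ is injective, or more generally when all slice topologies along each fibre of $e$ coincide, the fibre is harmless and the argument closes.

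In full generality, however, I expect this step to fail, and the statement appears to need such an extra hypothesis. Here is a counterexample. Take $X=Y=\{0,1\}$, $A=\{a,b\}$, $B=\{c\}$, $e$ constant and $q=\mathrm{id}$. Let
\[
\tau_X=\{(F,A): F(b)\in\{\emptyset,X\}\}.
\]
This is slice-determined, with $(\tau_X)_a$ discrete and $(\tau_X)_b$ indiscrete. Equip $Y$ with the quotient topology $\tau_{q,e}$ of Theorem~\ref{prop3.3}. A subset $G(c)$ is open there exactly when it is open in both slices, so $\tau_{q,e}$ is indiscrete. Then $q_a$ is the identity from a discrete space onto an indiscrete one, which is not a quotient map.

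So when writing the proof I would add the hypothesis that $e$ is injective, or that the slices $(\tau_X)_{a'}$ agree on each fibre of $e$. Under that hypothesis the construction above completes the argument.
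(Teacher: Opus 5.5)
Your diagnosis is correct: the gap you found is an error in the paper's statement and proof, not in your argument. The paper's proof follows your route. For the converse direction it sets $G(e(a))=U$ and $G(b)=\emptyset$ for $b\neq e(a)$. It then asserts that $\phi^{-1}_{q,e}(G,B)(a)=q_a^{-1}(U)$ ``and all other slices are empty'', so that slice-determinacy yields $\phi^{-1}_{q,e}(G,B)\in\tau_X$. That assertion fails as soon as $e$ is not injective. Indeed, $\phi^{-1}_{q,e}(G,B)(a')=q^{-1}(U)$ for every $a'\in e^{-1}(e(a))$, and nothing forces $q^{-1}(U)\in(\tau_X)_{a'}$.

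Your counterexample is valid:
\begin{itemize}
\item $\tau_X=\{(F,A):F(b)\in\{\emptyset,X\}\}$ is a soft topology.
\item It is slice-determined, with $(\tau_X)_a=\mathcal{P}(X)$ and $(\tau_X)_b=\{\emptyset,X\}$. It is the slice-determined topology with the same slices as the paper's Example~\ref{softquotintnotslicewise}, which is itself not slice-determined.
\item $(q,e)$ is soft surjective.
\item $\tau_{q,e}$ is indiscrete, because both slices of $\phi^{-1}_{q,e}(G,B)$ equal $G(c)$.
\item $q_a$ is the identity from a discrete space onto an indiscrete one, so it is not a quotient map.
\end{itemize}
So the statement is false as written.

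Under your extra hypothesis, your argument and the paper's coincide and are complete. That hypothesis is either that $e$ is injective, or that the slice topologies of $\tau_X$ are constant along each fibre of $e$. Injectivity already covers every later application of this theorem in the paper, since all of them take $e=\id_E$.

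The same example also refutes Corollary~\ref{cor:slice-quotient-topology} for non-injective $e$. It likewise refutes the ``only if'' direction of Theorem~\ref{sliceimplysoftquotirnt}, since here $\tau_{q,e}$ is itself slice-determined. The correction therefore has to be carried over to those results as well.
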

\begin{proof}
Fix $a\in A$. Let $U\subseteq Y_{e(a)}$. If $U\in(\tau_Y)_{e(a)}$, then by definition there exists a soft open set $(G,B)$ over $Y$ with $G(e(a))=U$. Since $q$ is a soft $e$-quotient map, it follows that $\phi^{-1}_{q,e}(G,B)\in\tau_X$. This yields, $q_a^{-1}(U)=\phi^{-1}_{q,e}(G,B)(a)\in(\tau_X)_a.$ Conversely, suppose $q_a^{-1}(U)\in(\tau_X)_a$. Define a soft set $(G,B)$ on $Y$ by setting $G(e(a))=U$ and $G(b)=\emptyset$ for $b\neq e(a)$. Then $\phi^{-1}_{q,e}(G,B)(a)=q_a^{-1}(U)$ and all other slices are empty. Hence $\phi^{-1}_{q,e}(G,B)\in\tau_X$. Since $q$ is a soft $e$-quotient map, it follows that $(G,B)\in\tau_Y$, and hence $U\in(\tau_Y)_{e(a)}$. Thus, $U\in(\tau_Y)_{e(a)}$ if and only if $q_a^{-1}(U)\in(\tau_X)_a$, proving that $q_a:X_a\to Y_{e(a)}$ is a quotient map.
\end{proof}
The converse of Theorem~\ref{prop:slice-quotient-general} does not hold in general. Example~\ref{exmple324} demonstrates that the requirement for all slice maps to be classical quotient maps is insufficient to guarantee that the corresponding soft map is a soft quotient map.
\begin{example}\label{exmple324}
Let $E=\{e_1,e_2\}$ and $X=Y=\{0,1\}$. Let $q:X\to Y$ be the identity map with $e=\id_E$. Equip $X$ with the discrete soft topology $\tau^{discrete}_X=SS(X,E)$ and equip $Y$ with the diagonal soft topology $\tau^{\Delta}_Y=\{(G,E): G(e_1)=G(e_2)\}.$ Then each slice $X_{e_i}$ and $Y_{e_i}$ is discrete. Consequently, for each $e_i\in E$, the slice map $q_{e_i}:X_{e_i}\to Y_{e_i}$ is a classical quotient map. However, $q$ is not a soft $e$-quotient map. Indeed, if $(H,E)$ is defined by $H(e_1)=\{0\}$ and $H(e_2)=\emptyset$, then $(H,E)\notin\tau^{\Delta}_Y$, while $\phi^{-1}_{q,e}(H,E)=(H,E)\in\tau^{discrete}_X$.
\end{example}
But when both the domain and co-domain carry slice–determined soft topologies, then converse of Theorem~\ref{prop:slice-quotient-general} also holds. In fact, we obtain the following theorem. Its proof is completely analogous to that of Theorem~\ref{prop:slice-quotient-general} and is therefore omitted.
\begin{theorem}\label{sliceimplysoftquotirnt}
    Let $(X,\tau_X,A)$ and $(Y,\tau_Y,B)$ be soft topological spaces, and let $(q,e):(X,\tau_X,A)\to (Y,\tau_Y,B)$ be a soft surjective map. If the soft topologies $\tau_{X}$ and $\tau_{Y}$ are both slice-determined soft topologies then $q$ is soft $e$-quotient map if and only if for each parameter $a\in A$, the slice map $q_a : X_a \rightarrow Y_{e(a)}$ is a quotient map with respect to the slice topologies.
\end{theorem}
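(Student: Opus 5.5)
The forward implication is already essentially Theorem~\ref{prop:slice-quotient-general}. Assume $(q,e)$ is a soft $e$-quotient map and $\tau_X$ is slice-determined. Then that theorem says that for every $a\in A$ the slice map $q_a\colon X_a\to Y_{e(a)}$ is a classical quotient map. The extra hypothesis on $\tau_Y$ is not needed for this direction.

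One point in that argument should be double-checked. The test soft set $(G,B)$ with $G(e(a))=U$ and $G(b)=\emptyset$ otherwise has inverse soft image whose slice at every $a'\in e^{-1}(e(a))$ equals $q^{-1}(U)$, not only the slice at $a$. To conclude $\phi^{-1}_{q,e}(G,B)\in\tau_X$ by slice-determinedness, one needs $q^{-1}(U)\in(\tau_X)_{a'}$ for all such $a'$. I would re-verify this carefully for $a'\neq a$. If it cannot be justified in general, I would record it as the point where injectivity of $e$, or a compatibility of the slices $(\tau_X)_{a'}$ over a fibre of $e$, is implicitly used. Everything else in that direction is bookkeeping.

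For the converse, assume every $q_a$ is a classical quotient map and take an arbitrary $(G,B)\in SS(Y,B)$. By definition, $\phi^{-1}_{q,e}(G,B)$ has slices $F(a)=q^{-1}(G(e(a)))=q_a^{-1}(G(e(a)))$. I then check the two implications in the definition of a soft $e$-quotient map.

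\emph{If $(G,B)\in\tau_Y$:} each $G(e(a))$ lies in $(\tau_Y)_{e(a)}$ by the definition of the slice topology. Continuity of $q_a$ gives $F(a)\in(\tau_X)_a$ for all $a\in A$, and slice-determinedness of $\tau_X$ gives $\phi^{-1}_{q,e}(G,B)\in\tau_X$.

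\emph{If $\phi^{-1}_{q,e}(G,B)\in\tau_X$:} each $F(a)=q_a^{-1}(G(e(a)))$ lies in $(\tau_X)_a$, and since $q_a$ is a quotient map, $G(e(a))\in(\tau_Y)_{e(a)}$. Here surjectivity of $e$, which follows from soft surjectivity of $(q,e)$ by the remark preceding the definition, is essential. Every $b\in B$ has the form $e(a)$, so $G(b)\in(\tau_Y)_b$ for all $b\in B$. Slice-determinedness of $\tau_Y$ then yields $(G,B)\in\tau_Y$.

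In this direction no fibre issue arises, because each $a$ is treated separately and only membership of each slice is needed. The only real obstacle is therefore the fibre subtlety in the forward direction noted above. The converse is a direct slice-by-slice verification using the two slice-determinedness hypotheses and surjectivity of $e$.
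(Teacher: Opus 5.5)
\textbf{Converse direction.} This part of your proposal is correct and complete. It is the slice-by-slice argument the paper has in mind when it omits the proof as ``completely analogous'' to Theorem~\ref{prop:slice-quotient-general}. It uses only continuity and the quotient property of each $q_a$, surjectivity of $e$, and slice-determinedness of $\tau_X$ and $\tau_Y$. It needs no injectivity of $e$.

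\textbf{Forward direction.} This is where the gap is: you leave it unproved. The fibre problem you flagged cannot be checked away, because the implication is false when $e$ is not injective. The paper's own argument, via Theorem~\ref{prop:slice-quotient-general}, fails exactly at the sentence ``all other slices are empty'', since $\phi^{-1}_{q,e}(G,B)(a')=q^{-1}(U)$ for every $a'\in e^{-1}(e(a))$.

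For a counterexample, take $X=Y=\{0,1\}$, $A=\{a_1,a_2\}$, $B=\{b\}$, $q=\id_X$ and $e(a_1)=e(a_2)=b$. Let $\tau_X=\{(F,A): F(a_2)\in\{\emptyset,X\}\}$. This is slice-determined, with $(\tau_X)_{a_1}=\mathcal{P}(X)$ and $(\tau_X)_{a_2}=\{\emptyset,X\}$. Let $\tau_Y=\{(0,B),(1,B)\}$, which is slice-determined because $B$ is a singleton. Both slices of $\phi^{-1}_{q,e}(G,B)$ equal $G(b)$. Hence $\phi^{-1}_{q,e}(G,B)\in\tau_X$ if and only if $G(b)\in\{\emptyset,Y\}$, if and only if $(G,B)\in\tau_Y$. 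So $(q,e)$ is a soft surjective soft $e$-quotient map. But $q_{a_1}\colon X_{a_1}\to Y_b$ is the identity from the discrete to the indiscrete two-point space, which is not a quotient map. In your test set with $a=a_1$ and $U=\{0\}$, the slice at $a_2$ is $\{0\}\notin(\tau_X)_{a_2}$, which is precisely the failure you anticipated. This is the paper's Example~\ref{softquotintnotslicewise} with $\tau_X$ replaced by the slice-determined topology having the same slices. It also refutes Theorem~\ref{prop:slice-quotient-general} and Corollary~\ref{cor:slice-quotient-topology} for non-injective $e$. So the statement as written cannot be proved, by you or by the paper.

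\textbf{Repair.} Assume $e$ is injective. Then the slices of your test set away from $a$ are $q^{-1}(\emptyset)=\emptyset$, slice-determinedness of $\tau_X$ applies, and the forward direction goes through without any hypothesis on $\tau_Y$. More precisely, for a soft $e$-quotient map with slice-determined $\tau_X$ one computes
$(\tau_Y)_b=\bigcap_{a\in e^{-1}(b)}\{U\subseteq Y: q^{-1}(U)\in(\tau_X)_a\}$.
So every $q_a$ is a quotient map precisely when these slice quotient topologies agree along each fibre of $e$. The later uses of the forward direction in the paper all have $e=\id_E$, so they are unaffected.
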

\begin{corollary}\label{cor:slice-quotient-topology}
Let $(X,\tau_{X},A)$ be a soft topological space, and let $(q,e):(X,\tau_X,A)\to (Y,\tau_{q,e},B)$ be a soft quotient map, where $\tau_{q,e}$ denotes the soft quotient topology induced by $(q,e)$ in the sense of Definition~\ref{quotient_soft}. If $\tau_X$ is a slice–determined soft topology, then for each $a\in A$ the slice topology on $Y_{e(a)}$ induced by the soft quotient topology $\tau_{q,e}$ coincides with the classical quotient topology determined by the slice map $q_a:X_a\to Y_{e(a)}$.
\end{corollary}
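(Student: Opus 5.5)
The corollary is a direct consequence of Theorem~\ref{prop:slice-quotient-general} applied to the specific target $(Y,\tau_{q,e},B)$. By Theorem~\ref{prop3.3}, $\tau_{q,e}$ is a soft topology on $Y$ and $(q,e)\colon (X,\tau_X,A)\to (Y,\tau_{q,e},B)$ is a soft $e$-quotient map. Since $\tau_X$ is slice-determined by hypothesis, Theorem~\ref{prop:slice-quotient-general} applies. Hence, for each $a\in A$, the slice map $q_a\colon X_a\to Y_{e(a)}$ is a classical quotient map, where $Y_{e(a)}$ carries the slice topology $(\tau_{q,e})_{e(a)}$.

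The remaining step is the classical fact that a quotient topology is unique. A surjection $q_a\colon X_a\to Y$ is a quotient map exactly when the topology on $Y$ is $\{U\subseteq Y : q_a^{-1}(U)\in(\tau_X)_a\}$. Surjectivity of $q_a$ holds because $q$ itself is surjective. Thus, for every $U\subseteq Y$,
\[
U\in(\tau_{q,e})_{e(a)} \iff q_a^{-1}(U)\in(\tau_X)_a ,
\]
and the right-hand side defines the classical quotient topology induced by $q_a$. So the two topologies coincide.

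As a sanity check, I would also verify the two inclusions directly, as in the proof of Theorem~\ref{prop:slice-quotient-general}. For ``$\subseteq$'': if $U=G(e(a))$ for some $(G,B)\in\tau_{q,e}$, then $q_a^{-1}(U)=\phi^{-1}_{q,e}(G,B)(a)\in(\tau_X)_a$. For ``$\supseteq$'': given $q_a^{-1}(U)$ open in $X_a$, define $G(e(a))=U$ and $G=\emptyset$ elsewhere. The step I expect to need care is that $\phi^{-1}_{q,e}(G,B)$ equals $q^{-1}(U)$ at every $a'$ with $e(a')=e(a)$, not only at $a$. This soft set is nonetheless open, because for each such $a'$ we need $q^{-1}(U)\in(\tau_X)_{a'}$.

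That requirement seems to need $q_{a'}^{-1}(U)$ to be open in $X_{a'}$ as well. So I expect the main obstacle to be non-injective $e$: the argument really shows that $(\tau_{q,e})_{b}$ is the quotient topology for the family $\{q_{a'} : a'\in e^{-1}(b)\}$. If this gap matters, I would handle it in one of two ways. One option is to rely on the paper's Theorem~\ref{prop:slice-quotient-general} as given. The other is to note that the corollary is immediate when $e$ is injective, and to interpret the statement through that theorem otherwise.
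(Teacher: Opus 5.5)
\textbf{The statement is false for non-injective $e$.} Your reduction is the one the paper intends: apply Theorem~\ref{prop:slice-quotient-general} to $\tau_Y=\tau_{q,e}$, then use uniqueness of the classical quotient topology. Your ``$\subseteq$'' inclusion is also fine. But the issue you raise at the end is a genuine failure of the statement, not a point requiring extra care, and neither of your fallbacks closes it.

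Relying on Theorem~\ref{prop:slice-quotient-general} as given does not help, because its proof contains exactly the step you doubted. It asserts that all slices of $\phi^{-1}_{q,e}(G,B)$ other than the $a$-slice are empty. In fact $\phi^{-1}_{q,e}(G,B)(a')=q^{-1}(U)$ for every $a'\in e^{-1}(e(a))$.

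Here is an explicit counterexample. Let $A=\{a_1,a_2\}$, $B=\{b_0\}$, $e$ constant, $X=Y=\{0,1\}$, $q=\id$, and $\tau_X=\{(F,A): F(a_2)\in\{\emptyset,X\}\}$. This is a slice-determined soft topology with $(\tau_X)_{a_1}=\mathcal{P}(X)$ and $(\tau_X)_{a_2}=\{\emptyset,X\}$. Both slices of $\phi^{-1}_{q,e}(G,B)$ equal $G(b_0)$, so $\tau_{q,e}=\{(0,B),(1,B)\}$. Hence $(\tau_{q,e})_{b_0}$ is indiscrete, while the quotient topology induced by $q_{a_1}\colon X_{a_1}\to Y$ is discrete. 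This is Example~\ref{softquotintnotslicewise} with the domain topology replaced by the slice-determined one having the same slices. It also refutes Theorem~\ref{prop:slice-quotient-general} itself, since $q_{a_1}$ is not a quotient map.

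\textbf{What your computation does prove.} For slice-determined $\tau_X$ and every $b\in B$, your argument establishes
\[
(\tau_{q,e})_b=\{U\subseteq Y:\ q^{-1}(U)\in(\tau_X)_{a'}\ \text{for all } a'\in e^{-1}(b)\}.
\]
This is the final topology of the family $\{q_{a'}\}_{a'\in e^{-1}(b)}$, i.e.\ the intersection of their classical quotient topologies. So the corollary holds at $a$ exactly when the quotient topology of $q_a$ is contained in that of every $q_{a'}$ with $e(a')=e(a)$. In particular it holds whenever $e^{-1}(e(a))=\{a\}$, e.g.\ for injective $e$. That case covers all later uses in the paper, where $e=\id_E$. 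The correct way to finish is to add such a hypothesis, or to prove the family version above, rather than to interpret the statement through Theorem~\ref{prop:slice-quotient-general}.
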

\begin{remark}
Without the assumption that $\tau_{X}$ is a slice-determined soft topology, the slice topology on $Y_{e(a)}$ induced from the soft quotient topology $\tau_{q,e}$ is in general, \emph{coarser} than the classical quotient topology on $Y_{e(a)}$ determined by the slice map $q_{a}$.
\end{remark}

We now consider quotient maps arising from equivalence relations on the underlying set of a soft space. Let $(X,\tau_X,E)$ be a soft space and $\sim$ an equivalence relation on $X$. Denote by $X/{\sim}$ the set of equivalence classes and by $\pi:X\to X/{\sim}$ the canonical projection.
\begin{definition}\label{soft!quotient}
The \emph{soft quotient space} of $(X,\tau_X,E)$ by $\sim$ is the soft space $(X/{\sim},\tau_\pi,E)$ where $\tau_\pi$ is the soft quotient topology induced by $(\pi,\id_E)$, i.e. $\tau_\pi=\{(G,E):\phi^{-1}_{\pi,\id_E}(G,E)\in\tau_X\}.$
\end{definition}
As a direct application of Theorem~\ref{prop:slice-quotient-general} with \((q,e) = (\pi,\id_E)\), we obtain the following result.
\begin{Theorem}\label{slicequotient}
If $\tau_{X}$ is a slice-determined soft topology then, for each parameter $e\in E$, the slice topology on $(X/{\sim})_e$ is the classical quotient topology induced by the classical projection $\pi_e:X_e\to X_e/{\sim}$.
\end{Theorem}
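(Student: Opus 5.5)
The statement is essentially Corollary~\ref{cor:slice-quotient-topology} in the special case $(q,e)=(\pi,\id_E)$. The plan is to apply Theorem~\ref{prop:slice-quotient-general} directly and then identify the resulting classical quotient topology with the one on $X_e/{\sim}$.

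First, check the hypotheses. The map $\pi\colon X\to X/{\sim}$ is surjective and $\id_E$ is bijective, so $(\pi,\id_E)$ is soft surjective. By Theorem~\ref{prop3.3}, $(\pi,\id_E)\colon(X,\tau_X,E)\to(X/{\sim},\tau_\pi,E)$ is a soft quotient map. Since $\tau_X$ is slice-determined, Theorem~\ref{prop:slice-quotient-general} applies. For each $e\in E$, it says the slice map $\pi_e\colon X_e\to (X/{\sim})_{\id_E(e)}=(X/{\sim})_e$ is a classical quotient map with respect to $(\tau_X)_e$ and $(\tau_\pi)_e$. Concretely, $U\in(\tau_\pi)_e$ if and only if $\pi^{-1}(U)\in(\tau_X)_e$.

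Second, identify this with the classical quotient $X_e/{\sim}$. As a set, $X_e$ is just $X$, so the equivalence classes of $\sim$ on $X_e$ are those on $X$. The classical projection $\pi_e\colon X_e\to X_e/{\sim}$ therefore coincides with $\pi$ as a map of sets. The classical quotient topology on $X_e/{\sim}$ is by definition $\{U:\pi^{-1}(U)\in(\tau_X)_e\}$, which is exactly $(\tau_\pi)_e$ by the previous step. Hence the two topologies coincide.

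The only point needing care is the converse direction inside Theorem~\ref{prop:slice-quotient-general}, which we take as given. There, from $\pi^{-1}(U)\in(\tau_X)_e$ one builds the soft set supported only at the parameter $e$. Its inverse image is soft open because $\tau_X$ is slice-determined; here it matters that $\emptyset$ lies in every slice topology. Since $e=\id_E$ is injective, no other parameters interfere, so no further obstacle arises.
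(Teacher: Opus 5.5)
Your proposal is correct and is essentially the paper's argument: the statement is obtained by applying Theorem~\ref{prop:slice-quotient-general} to the soft quotient map $(\pi,\id_E)$. Your remark about injectivity is well placed. Because $\id_E$ is injective, the inverse image of the soft set supported at $e$ is empty at every other parameter, and the converse step of the general theorem's proof genuinely needs this.
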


\subsection{Compositions and restrictions}
We next describe the behaviour of soft quotient maps under composition and restriction.
\begin{Theorem}
Let $(q_1,e_1):(X,\tau_X,A)\to(Y,\tau_Y,B)$ and $(q_2,e_2):(Y,\tau_Y,B)\to(Z,\tau_Z,C)$ be soft quotient maps, respectively. Then the composite $(q_2\circ q_1,e_2\circ e_1):(X,\tau_X,A)\rightarrow(Z,\tau_Z,C)$ is a soft quotient map.
\end{Theorem}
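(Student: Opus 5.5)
The plan has two parts: check that the composite is soft surjective, then reduce the quotient condition for the composite to the quotient conditions for the two factors. The reduction rests on a functoriality identity for inverse soft images.

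\emph{Soft surjectivity.} By the remark following the definition of soft surjective maps, $\phi_{q_i,e_i}$ is surjective if and only if both $q_i$ and $e_i$ are surjective. Compositions of surjections are surjective, so $q_2\circ q_1$ and $e_2\circ e_1$ are surjective. Hence the composite $(q_2\circ q_1,e_2\circ e_1)$ is soft surjective.

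\emph{Functoriality of inverse soft images.} The key step is the identity
\[
\phi^{-1}_{q_2\circ q_1,\,e_2\circ e_1}(H,C)=\phi^{-1}_{q_1,e_1}\bigl(\phi^{-1}_{q_2,e_2}(H,C)\bigr)
\]
for every soft set $(H,C)$ over $Z$. To verify it, write $\phi^{-1}_{q_2,e_2}(H,C)=(G,B)$, so that $G(b)=q_2^{-1}(H(e_2(b)))$ for each $b\in B$. Then for each $a\in A$ the right-hand side evaluated at $a$ is
\[
q_1^{-1}\bigl(G(e_1(a))\bigr)=q_1^{-1}\bigl(q_2^{-1}(H(e_2(e_1(a))))\bigr)=(q_2\circ q_1)^{-1}\bigl(H((e_2\circ e_1)(a))\bigr),
\]
which is exactly the left-hand side evaluated at $a$. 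This is the only real computation in the proof, and it is immediate from the definition of the inverse soft image. The point to be careful about is that only inverse images are used here. The forward map $\phi_{f,e}$, with its union over fibres, never enters the argument, so non-injectivity of $e_1$ or $e_2$ causes no difficulty.

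\emph{Chain of equivalences.} Fix a soft set $(H,C)$ over $Z$. Since $(q_2,e_2)$ is a soft quotient map, $(H,C)\in\tau_Z$ if and only if $\phi^{-1}_{q_2,e_2}(H,C)\in\tau_Y$. Since $(q_1,e_1)$ is a soft quotient map, this holds if and only if $\phi^{-1}_{q_1,e_1}\bigl(\phi^{-1}_{q_2,e_2}(H,C)\bigr)\in\tau_X$. By the functoriality identity, this last soft set is $\phi^{-1}_{q_2\circ q_1,e_2\circ e_1}(H,C)$. Thus $(H,C)$ is soft open in $Z$ exactly when its inverse soft image under the composite is soft open in $X$, which is the defining condition for the composite to be a soft quotient map.
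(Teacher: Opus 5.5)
Your proof is correct and follows the paper's route. You establish soft surjectivity of the composite, then the identity $\phi^{-1}_{q_2\circ q_1,\,e_2\circ e_1}=\phi^{-1}_{q_1,e_1}\circ\phi^{-1}_{q_2,e_2}$, then the resulting chain of equivalences; the only difference is that you verify this identity slice-by-slice, whereas the paper just asserts it.
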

\begin{proof}
The composite $(q_2\circ q_1,\,e_2\circ e_1)$ is clearly soft surjective. Soft $(e_2\circ e_1)$-continuity of $(q_{2}\circ q_{1})$ follows from the identity $\phi^{-1}_{q_2\circ q_1,\,e_2\circ e_1} = \phi^{-1}_{q_1,e_1}\circ \phi^{-1}_{q_2,e_2}.$ Let $(K,C)$ be a soft set on $Z$. Using the above formula and the fact that $(q_1,e_1)$ and $(q_2,e_2)$ are soft quotient maps, we obtain
\[
(K,C)\in\tau_Z \;\iff\; \phi^{-1}_{q_2,e_2}(K,C)\in\tau_Y \;\iff\; \phi^{-1}_{q_2\circ q_1,\,e_2\circ e_1}(K,C)\in\tau_X.
\]
Hence $q_2\circ q_1$ is a soft $e_2\circ e_1$-quotient map.
\end{proof}
In general, the restriction of a soft quotient map to an arbitrary subset need not be a soft quotient map; see \cite[Example~1]{MunkresTopology} for the classical analogue. Theorem~\ref{qres} gives a sufficient condition under which such a restriction remains a soft quotient map. Before stating the theorem, we recall the following definition from \cite{MunkresTopology}.
\begin{definition}
Let $q:X\to Y$ be map between sets. A subset $S\subseteq X$ is called \emph{$q$-saturated} if $S=q^{-1}(q(S))$.
\end{definition}
\begin{remark}
Let $A$ and $B$ are set of set parameters on universes $X$ and $Y$ respectively. Let $(q,e)\colon (X,A)\rightarrow (Y,B)$ be a map between them. If $S$ is a saturated subset of $X$, then one can check that the soft inverse image of $\widetilde{q(S)}$, i.e., $\phi^{-1}_{q,e}(\widetilde{q(S))}$ is nothing but $\widetilde{S}$.
\end{remark}
Now we have the following theorem.
\begin{Theorem}\label{qres}
Let $(q,e):(X,\tau_X,A)\to(Y,\tau_Y,B)$ be a soft quotient map and let $S\subseteq X$ be a $q$-saturated subset. Equip $S$ with the subspace soft topology and $q(S)\subseteq Y$ with the subspace soft topology inherited from $(Y,\tau_Y,B)$. If $S$ is either soft open or soft closed in $(X,\tau_X,A)$, Then the restricted map $(q|_S,e):(S,\tau_S,A)\rightarrow(q(S),\tau_{q(S)},B)$ is a soft quotient map.
\end{Theorem}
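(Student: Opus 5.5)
The plan is to follow the classical argument (Munkres, Theorem 22.1), carried out slice by slice on soft sets. Write $p=q|_S$ and $T=q(S)$.

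\emph{Soft surjectivity.} Since $q$ and $e$ are surjective, so are $p\colon S\to T$ and $e$. Hence $(p,e)$ is soft surjective.

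\emph{Basic identity.} For a soft set $(H,B)$ over $Y$ and $a\in A$,
\[
p^{-1}\bigl(H(e(a))\cap T\bigr)=q^{-1}\bigl(H(e(a))\bigr)\cap S .
\]
This uses saturation: if $x\in S$ and $q(x)\in H(e(a))$, then automatically $q(x)\in T$. In soft notation the identity reads
\[
\phi^{-1}_{p,e}\bigl((H,B)\sqcap\widetilde{T}\bigr)=\phi^{-1}_{q,e}(H,B)\sqcap\widetilde{S}.
\]
Soft continuity of $(p,e)$ follows at once. A soft open set of $T$ has the form $(H,B)\sqcap\widetilde{T}$ with $(H,B)\in\tau_Y$, so its preimage is $\phi^{-1}_{q,e}(H,B)\sqcap\widetilde S$. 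This lies in $\tau_S$ because $\phi^{-1}_{q,e}(H,B)\in\tau_X$.

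\emph{Converse direction.} Let $(G,B)$ be a soft set over $T$ with $\phi^{-1}_{p,e}(G,B)\in\tau_S$. We must show $(G,B)\in\tau_T$. We split into the two cases of the hypothesis.

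\emph{Case 1: $S$ soft open.} Then $\widetilde S\in\tau_X$. Since $\tau_S=\tau_X\sqcap\widetilde S$, every member of $\tau_S$ is an intersection of two members of $\tau_X$ and hence lies in $\tau_X$. So $\phi^{-1}_{p,e}(G,B)\in\tau_X$. Regard $(G,B)$ as a soft set over $Y$. Because $G(b)\subseteq T$, we have slice-wise
\[
q^{-1}\bigl(G(e(a))\bigr)=p^{-1}\bigl(G(e(a))\bigr).
\]
So $\phi^{-1}_{q,e}(G,B)=\phi^{-1}_{p,e}(G,B)\in\tau_X$. Since $(q,e)$ is a soft quotient map, $(G,B)\in\tau_Y$. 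Therefore $(G,B)=(G,B)\sqcap\widetilde T\in\tau_T$.

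\emph{Case 2: $S$ soft closed.} Argue with closed soft sets, using the equivalent closed-set form of the quotient definition. The complement of $\widetilde S$ is soft open, so $\widetilde S$ is soft closed. Closed soft sets of the subspace $S$ are of the form $(C,A)\sqcap\widetilde S$ with $(C,A)$ soft closed in $X$. Hence closed soft sets of $S$ are closed in $X$. Take $(G,B)$ over $T$ whose preimage under $(p,e)$ is closed in $S$. By the same computation, $\phi^{-1}_{q,e}(G,B)$ is closed in $X$, so $(G,B)$ is closed in $Y$. Intersecting with $\widetilde T$ then shows it is closed in $T$.

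\emph{Main obstacle.} The delicate step is bookkeeping complements in Case 2, because two complements are in play. When $(G,B)$ is viewed over $T$, its complement is taken relative to $T$, giving $T\setminus G(b)$. When it is viewed over $Y$, the complement is $Y\setminus G(b)$.

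I will handle this by checking the identity
\[
Y\setminus G(b)=\bigl(T\setminus G(b)\bigr)\cup(Y\setminus T).
\]
Then I will use saturation in the form $q^{-1}(Y\setminus T)=X\setminus S$. With these, the soft closedness of $\phi^{-1}_{q,e}(G,B)$ in $X$ reduces to the closedness of its $S$-relative version together with $\widetilde S$ being soft closed.

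A further point, used in both cases, is that a soft set over $T$ with empty slices outside $T$ is simultaneously a soft set over $Y$. The identification of the two inverse soft images should be stated explicitly via the remark that $\phi^{-1}_{q,e}(\widetilde{q(S)})=\widetilde S$.
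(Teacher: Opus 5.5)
Your proof is correct and follows essentially the same route as the paper's: soft continuity via $\phi^{-1}_{q|_S,e}\bigl((H,B)\sqcap\widetilde{q(S)}\bigr)=\phi^{-1}_{q,e}(H,B)\sqcap\widetilde S$; the converse by using saturation to identify $\phi^{-1}_{q|_S,e}(G,B)$ with $\phi^{-1}_{q,e}(G,B)$, and soft openness (resp.\ closedness) of $S$ to pass from $\tau_S$ to $\tau_X$; your closed case spells out what the paper only calls ``analogous''. One small slip: the identity $p^{-1}\bigl(H(e(a))\cap T\bigr)=q^{-1}\bigl(H(e(a))\bigr)\cap S$ holds for any $S$ because $T=q(S)$, so saturation is not used there. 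It is needed only for $q^{-1}(W)=p^{-1}(W)$ when $W\subseteq T$ (equivalently $q^{-1}(T)=S$), which is where you correctly invoke it later.
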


\begin{proof}
Assume first that $S$ is soft open in $(X,\tau_X,A)$. Let $(V,B)$ be a soft set over $q(S)$. Since $S$ is $q$-saturated, it follows that $\phi^{-1}_{q|_S,e}(V,B)=\phi^{-1}_{q,e}(V,B)$. If $(V,B)$ is soft open over $q(S)$, then by definition of the soft subspace topology, there exists a soft open set $(G,B)$ over $Y$ such that $(V,B)=(G,B)\sqcap\widetilde{q(S)}$. Hence $\phi^{-1}_{q|_S,e}(V,B) =\phi^{-1}_{q,e}(G,B)\sqcap\widetilde{S}.$ Since $q$ is soft $e$-continuous and $S$ is soft open, it follows that the right-hand side is soft open in $S$. Conversely, if $\phi^{-1}_{q|_S,e}(V,B)$ is soft open over $S$, then (as $S$ is soft open in $X$) it is soft open over $X$. Since $(q,e)$ is a soft $e$-quotient map, this implies $(V,B)$ is soft open in $Y$, and hence soft open in $q(S)$. The case where $S$ is soft closed in $(X,\tau_X,A)$ follows by an analogous argument.
\end{proof}

\subsection{Characterisations via openness and closedness}
Recall that in classical topology, a surjective continuous map that is either open or closed is a quotient map \cite[Theorem 4.3]{MunkresTopology}. This result extends to the soft topological context. Before proceeding, we recall the definitions of soft open and soft closed maps.
\begin{definition}\label{opensoft} 
Let $(f,e):(X,\tau_X,A)\to(Y,\tau_Y,B)$ be a soft map. The map $f$ is called a \emph{soft $e$-open map} (respectively, a \emph{soft $e$-closed map}) if for every soft open (respectively, soft closed) set $(F,A)$ over $X$, the image soft set $\phi_{f,e}(F,A)$ is a soft open (respectively, soft closed) set over $Y$.
\end{definition}
\begin{remark}\label{softopenusualopen}
   It is easy to observe that if $(f,e):(X,\tau_X,A)\to (Y,\tau_Y,B)$ is soft open map (respectively, soft closed map), then for every parameter $a\in A$ the slice map $f_a \;:=\; f|_{X_a} : X_a \rightarrow Y_{e(a)}$ is an open (respectively, closed) map in the classical sense.
\end{remark}
\begin{theorem}\label{thm:open-closed-quotient}
Let $(q,e):(X,\tau_X,A)\to(Y,\tau_Y,B)$ be a surjective soft continuous map. If $q$ is soft $e$-open or soft $e$-closed, then $q$ is a soft $e$-quotient map.
\end{theorem}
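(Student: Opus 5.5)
The plan is to argue directly from the definition of a soft $e$-quotient map. Soft surjectivity is part of the hypothesis, so only the equivalence remains to be shown. The forward implication, that $(G,B)\in\tau_Y$ gives $\phi^{-1}_{q,e}(G,B)\in\tau_X$, is exactly soft $e$-continuity. So all the work lies in the reverse implication, which I will handle separately for the open case and the closed case.

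The key set-theoretic step is the identity
\[
\phi_{q,e}\bigl(\phi^{-1}_{q,e}(G,B)\bigr)=(G,B)\quad\text{for every } (G,B)\in SS(Y,B),
\]
which holds whenever both $q$ and $e$ are surjective. To verify it, put $(F,A)=\phi^{-1}_{q,e}(G,B)$, so that $F(a)=q^{-1}(G(e(a)))$. Fix $b\in B$. Since $e$ is surjective, $e^{-1}(b)\neq\emptyset$, so by Definition~\ref{softlevel} the $b$-component of the image is
\[
\bigcup_{a\in e^{-1}(b)} q\bigl(q^{-1}(G(b))\bigr).
\]
Surjectivity of $q$ gives $q(q^{-1}(G(b)))=G(b)$, so this union equals $G(b)$. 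I expect this identity to be the only real point of the proof. It is where the surjectivity of both $q$ and $e$ is used, the latter to rule out the empty-preimage branch of Definition~\ref{softlevel}.

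\emph{Open case.} Suppose $q$ is soft $e$-open and $\phi^{-1}_{q,e}(G,B)\in\tau_X$. Then $\phi_{q,e}(\phi^{-1}_{q,e}(G,B))$ is soft open by Definition~\ref{opensoft}. By the identity above this set equals $(G,B)$, so $(G,B)\in\tau_Y$.

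\emph{Closed case.} Here I will use the equivalent closed-set formulation of soft quotient maps. First note that inverse soft images commute with soft complements:
\[
q^{-1}\bigl(Y\setminus G(e(a))\bigr)=X\setminus q^{-1}(G(e(a))),\quad\text{that is,}\quad \phi^{-1}_{q,e}(G^c,B)=\bigl(\phi^{-1}_{q,e}(G,B)\bigr)^c.
\]
By this identity, soft continuity implies that inverse images of soft closed sets are soft closed. For the converse, suppose $\phi^{-1}_{q,e}(K,B)$ is soft closed. Then soft $e$-closedness together with the identity $\phi_{q,e}\circ\phi^{-1}_{q,e}=\id$ shows that $(K,B)$ is soft closed. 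Combining the two directions, $(K,B)$ is soft closed if and only if $\phi^{-1}_{q,e}(K,B)$ is soft closed, so $q$ is a soft $e$-quotient map.
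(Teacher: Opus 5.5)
Your proof is correct and follows essentially the same route as the paper. Soft continuity gives one direction, and for the converse you apply soft $e$-openness (resp. $e$-closedness) to $\phi_{q,e}\bigl(\phi^{-1}_{q,e}(G,B)\bigr)$, which equals $(G,B)$ by surjectivity of $q$ and $e$; you also make explicit two points the paper leaves implicit: surjectivity of $e$ (needed to avoid the empty-preimage branch) and the complement identity behind the closed case, which the paper simply calls analogous.
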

\begin{proof}
We give the argument in the soft $e$-open case, the soft $e$-closed case is entirely analogous. Since $q$ is soft $e$-continuous, so $(G,B)\in\tau_Y \implies \phi^{-1}_{q,e}(G,B)\in\tau_X.$ Conversely, suppose $\phi^{-1}_{q,e}(G,B)=(F,A)\in\tau_X$ and set $(H,B)=\phi_{q,e}(F,A)$. By the soft $e$-open property of $q$, we have $(H,B)\in\tau_Y$. For $b\in B$, 
\[
H(b)=\bigcup_{a\in e^{-1}(b)} q(F(a)) =\bigcup_{a\in e^{-1}(b)} q\bigl(q^{-1}(G(e(a)))\bigr).
\]
Since $q$ is surjective and $e(a)=b$ for $a\in e^{-1}(b)$, it follows that $H(b)=G(b)$ for all $b\in B$. Thus $(H,B)=(G,B)$, and hence $(G,B)\in\tau_Y$. Therefore $(G,B)\in\tau_Y \iff \phi^{-1}_{q,e}(G,B)\in\tau_X,$ so $q$ is a soft $e$-quotient map.
\end{proof}
\begin{corollary}
    For two soft spaces $(X,\tau_{X},A)$ and $(Y,\tau_{Y},B)$, the canonical projections $\pi_{X}\colon X\times Y \to X$ and $\pi_{Y}\colon X\times Y \to Y$, defined in Remark~\ref{proj}, are soft $p_{A}$-open and soft $p_{B}$-open maps, respectively. Consequently, $\pi_{X}$ is a soft $p_{A}$-quotient map and $\pi_{Y}$-quotient map.
\end{corollary}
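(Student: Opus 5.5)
The plan is to prove directly that $\pi_X$ is soft $p_A$-open, and to do the same for $\pi_Y$ with $p_B$ in place of $p_A$. The quotient conclusion will then come from Theorem~\ref{thm:open-closed-quotient}. That theorem needs three inputs: soft continuity, which is Remark~\ref{proj}; soft surjectivity; and soft openness. For soft surjectivity we note that $\pi_X$ and $p_A$ are surjective because $X,Y,A,B$ are non-empty universes and parameter sets. By the earlier remark, this makes $\phi_{\pi_X,p_A}$ surjective.

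For openness we use the description of $\tau_{X\times Y}$ as the soft topology generated by rectangular soft products. We take the generating family to act as a base, which is legitimate because $(F_1,A)\boxtimes(G_1,B)\sqcap(F_2,A)\boxtimes(G_2,B)=(F_1\sqcap F_2,A)\boxtimes(G_1\sqcap G_2,B)$ and the family contains $(1,A)\boxtimes(1,B)$. Hence every soft open set $(H,A\times B)$ is a soft union $\bigsqcup_{i\in I}(F_i,A)\boxtimes(G_i,B)$. Next we check that $\phi_{\pi_X,p_A}$ commutes with arbitrary soft unions. Images under $\pi_X$ commute with unions, and the formula in Definition~\ref{softlevel} is a union over the fibre $p_A^{-1}(a)=\{a\}\times B$, so the claim follows. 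It therefore suffices to show that $\phi_{\pi_X,p_A}$ sends each basic rectangle to a soft open set.

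For a single rectangle we compute
\[
\phi_{\pi_X,p_A}\bigl((F,A)\boxtimes(G,B)\bigr)(a)=\bigcup_{b\in B}\pi_X\bigl(F(a)\times G(b)\bigr).
\]
This equals $F(a)$ if some $G(b)\neq\emptyset$, and equals $\emptyset$ otherwise. So the image is either $(F,A)\in\tau_X$ or the null soft set, and both are soft open. Taking unions shows that $\phi_{\pi_X,p_A}$ maps $\tau_{X\times Y}$ into $\tau_X$. The argument for $\pi_Y$ is symmetric, since the fibre $p_B^{-1}(b)=A\times\{b\}$ gives the image $G(b)$ or $\emptyset$.

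The main obstacle is the base step. The paper says ``generated by'', so we must make sure that every soft open set is a union of rectangles and not merely a union of finite intersections of rectangles. The intersection identity above settles this. If one prefers not to rely on that identity, one can allow finite intersections of rectangles directly: such an intersection is again a rectangle, so the same image computation applies.
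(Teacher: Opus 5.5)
Your argument is correct and follows the route the paper intends. The paper states this corollary without a separate proof, as a direct application of Theorem~\ref{thm:open-closed-quotient} once soft openness is known, and your computation supplies exactly that omitted verification: rectangles form a base closed under finite intersections, $\phi_{\pi_X,p_A}$ preserves soft unions, and each rectangle $(F,A)\boxtimes(G,B)$ is sent to $(F,A)$ or $(0,A)$. (Your closing fallback is not really an alternative, since it relies on the same intersection identity, but nothing in the proof depends on it.)
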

The converse of Theorem~\ref{thm:open-closed-quotient} does not hold in general, as shown in the following example.
\begin{example}
Let $E=\{e_1,e_2\}$, $X=\{1,2,3\}$, $Y=\{\alpha,\beta\}$, and define a surjection $q:X\to Y$ by $q(1)=q(2)=\alpha$ and $q(3)=\beta$. Define soft topologies $\tau_{X}$ and $\tau_{Y}$ on $X$ and $Y$ respectively by specifying, the slice topologies: 
\[
\tau_X^{(1)}=\{\emptyset,\{1\},\{2,3\},X\},\quad \tau_X^{(2)}=\mathcal{P}(X),\qquad \tau_Y^{(1)}=\{\emptyset,Y\},\quad \tau_Y^{(2)}=\mathcal{P}(Y).
\]
It is straightforward to verify that $(q,\id_E):(X,\tau_X,E)\rightarrow(Y,\tau_Y,E)$ is a soft quotient map. But, $q$ is neither soft $\id_E$-open nor soft $\id_E$-closed map. Indeed, if $(F,E)$ is defined by $F(e_1)=\{1\}$ and $F(e_2)=\emptyset$, then $(F,E)\in\tau_X$ but $\phi_{q,\id_E}(F,E)(e_1)=\{\alpha\}\notin\tau_Y^{(1)},$ so $q$ is not soft $\id_E$-open map. Taking complements shows that it is not soft $\id_E$-closed map too.
\end{example}
Next, we recall the notions of soft compact spaces and soft Hausdorff spaces.
\begin{definition}{\cite{ShabirNaz2011SoftTopo}}
A soft topological space $(Y,\tau_Y,E)$ is said to be \emph{Soft Hausdorff} if for any two distinct points $x,y\in Y$ there exist soft open sets $(G_{1},E),(G_{2},E)\in\tau_Y$ such that 
\[
x\widetilde{\in} (G_{1},E), \qquad y\widetilde{\in} (G_{2},E), \qquad (G_{1},E)\sqcap (G_{2},E)=(0,E).
\]
\end{definition}
There are two inequivalent notions of soft compact spaces \cite{hida2014comparison}.
\begin{definition}\cite{hida2014comparison}
\par\textbf{(First definition)} Let $(X,\tau_X,E)$ be a soft topological space, and let $\mathcal{C}\subseteq \tau_X$ be a family of soft open sets over $X$. We say that $\mathcal{C}$ is a \emph{Soft Covering} (SCV1) of $X$ if for every $x\in X$, there exists $(F,E)\in \mathcal{C}$ such that $x\widetilde{\in} (F,E)$. The soft topological space $(X,\tau_X,E)$ is called \emph{Soft Compact} (SCPT1) if every SCV1 of $X$ admits a finite subfamily which is again SCV1 of $X$.
\par\textbf{(Second definition)} Let $(X,\tau_X,E)$ be a soft topological space, and let $\mathcal{C}\subseteq \tau_X$ be a family of soft open sets over $X$. We say that $\mathcal{C}$ is a \emph{Soft Covering} (SCV2) of $X$ if 
\[
\bigsqcup_{(F,E)\in\mathcal{C} }(F,E)=(1,E).
\]
The soft topological space $(X,\tau_X,E)$ is called \emph{Soft Compact} (SCPT2) if every SCV2 of $X$ admits a finite subfamily which is again SCV2 of $X$.
\end{definition}

In classical topology, a continuous surjection from a compact space onto a Hausdorff space is a quotient map \cite[Corollary~4.4]{armstrong2013basic}. This conclusion does not, in general, extend to the soft topological setting, as shown by the following example.
\begin{example}
Let $X=Y=\{0,1\}$, $E=\{e_1,e_2\}$. Let the parametric map be $e=\id_E$ and let $q:X\to Y$ be the identity map. Equip $X$ with the discrete soft topology $\tau^{discrete}_X$ and $Y$ with the diagonal soft topology $\tau^{\Delta}_Y$. Clearly $(X,\tau^{discrete}_X,E)$ is soft compact in both senses (SCPT1 and SCPT2), while $(Y,\tau^{\Delta}_Y,E)$ is soft Hausdorff. Define $(H,E)$ on $Y$ by $H(e_1)=\{0\}$ and $H(e_2)=\emptyset$. Then $(H,E)\notin\tau^{\Delta}_Y$, but $\phi^{-1}_{q,e}(H,E)=(H,E)\in\tau^{discrete}_X$. Hence $q$ is not a soft $e$-quotient map.
\end{example}
Note that in the above example the soft topology on the domain is a slice-determined soft topology, whereas the soft topology on the codomain is not. The following theorem shows that, if both the domain and codomain carry slice-determined soft topologies, then a soft analogue of \cite[Corollary~4.4]{armstrong2013basic} holds.
\begin{theorem}
Let $(q,e)\colon (X,\tau_X,A)\to (Y,\tau_Y,B)$ be a  soft surjective map. If $(X,\tau_X,A)$ is soft compact in either sense \emph{(SCPT1 or SCPT2)}, $(Y,\tau_Y,B)$ is soft Hausdorff and both $\tau_X$ and $\tau_Y$ are slice-determined soft topologies, then $q$ is a soft $e$-quotient map.
\end{theorem}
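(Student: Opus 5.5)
The plan is to reduce the statement to the classical result \cite[Corollary~4.4]{armstrong2013basic} on each slice, and then pass back to the soft level using Theorem~\ref{sliceimplysoftquotirnt}. That theorem applies because both $\tau_X$ and $\tau_Y$ are slice-determined. It therefore suffices to show that for every $a\in A$ the slice map $q_a\colon X_a\to Y_{e(a)}$ is a classical quotient map. To do this I would check the hypotheses of the classical corollary one by one: continuity, surjectivity, compactness of the domain and Hausdorffness of the codomain.

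\emph{Continuity.} As in every quotient statement, I will use that $(q,e)$ is soft continuous; the statement only says ``soft surjective'', but I read continuity as implicit, since without it the conclusion fails trivially. Given $U\in(\tau_Y)_{e(a)}$, pick $(G,B)\in\tau_Y$ with $G(e(a))=U$. Then $q_a^{-1}(U)=\phi^{-1}_{q,e}(G,B)(a)\in(\tau_X)_a$, exactly as in the first half of the proof of Theorem~\ref{prop:slice-quotient-general}.

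\emph{Surjectivity and Hausdorffness.} Surjectivity of $q_a$ is just surjectivity of $q$. For the Hausdorff property, take distinct $y_1,y_2\in Y$ and the separating soft open sets $(G_1,B),(G_2,B)$ given by soft Hausdorffness. Since $y_i\widetilde{\in}(G_i,B)$, we have $y_i\in G_i(e(a))$. Since the soft intersection is null, $G_1(e(a))\cap G_2(e(a))=\emptyset$. So $Y_{e(a)}$ is Hausdorff.

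\emph{Compactness of $X_a$.} This is the step I expect to need the slice-determined hypothesis on $\tau_X$ in an essential way, and it is the main point. Let $\{U_i\}_{i\in I}$ be an open cover of $X_a$ by members of $(\tau_X)_a$. For each $i$ define $(F_i,A)$ by $F_i(a)=U_i$ and $F_i(a')=X$ for $a'\neq a$. Every slice of $F_i$ lies in the corresponding slice topology, so $(F_i,A)\in\tau_X$ because $\tau_X$ is slice-determined.
\begin{itemize}
\item Under SCPT2, $\bigsqcup_i(F_i,A)=(1,A)$, since the $U_i$ cover $X$ and the other slices are already $X$. A finite subfamily with soft union $(1,A)$ then gives, on the $a$-slice, a finite subcover $\{U_{i_1},\dots,U_{i_n}\}$.
\item Under SCPT1, each $x\in X$ lies in some $U_i$, and hence $x\widetilde{\in}(F_i,A)$. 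So $\{(F_i,A)\}$ is an SCV1. A finite SCV1 subfamily again yields a finite subcover of $X_a$.
\end{itemize}
In either case $X_a$ is compact.

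\emph{Conclusion.} With these four facts, $q_a$ is a continuous surjection from a compact space onto a Hausdorff space. Hence it is a closed map, and therefore a quotient map, by \cite[Corollary~4.4]{armstrong2013basic}. Since this holds for every $a\in A$, Theorem~\ref{sliceimplysoftquotirnt} shows that $q$ is a soft $e$-quotient map. The only delicate points are the construction of the padded soft sets $(F_i,A)$, which is where slice-determinedness of $\tau_X$ is used, and the implicit continuity assumption.
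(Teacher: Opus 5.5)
Your proof is correct and follows the paper's argument essentially step for step. You pad an open cover of $X_a$ with $X$ on the other slices, so that slice-determinedness of $\tau_X$ and SCPT1/SCPT2 give compactness of each slice; you then get Hausdorffness of each $Y_b$, apply the classical compact-to-Hausdorff result to each $q_a$, and conclude with Theorem~\ref{sliceimplysoftquotirnt}. Your remark that soft continuity of $(q,e)$ must be assumed is correct, and the paper's proof uses it tacitly when it calls $q_a$ a ``surjective continuous map''.
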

\begin{proof}
We first show that each slice $X_{a}$ is compact. Fix $a\in A$ and let $\{U_\alpha\}_{\alpha\in\Lambda}$ be an open cover of the slice space $X_a$. For each $\alpha\in\Lambda$, define a soft set $(F_\alpha,A)$ over $X$ by $F_\alpha(a)=U_\alpha \text{ and } F_\alpha(a')=X \text{ for all } a'\neq a,$ which is soft open. Then $\{(F_\alpha,A)\}_{\alpha\in\Lambda}$ is a soft open cover in the sense of \emph{SCV1} (resp. \emph{SCV2}) of $(X,\tau_X,A)$. By soft compactness of $X$ in the sense of \emph{SCPT1} (resp. \emph{SCPT2}), there exists a finite subset $\Lambda_0\subseteq\Lambda$ such that $\{U_\alpha\}_{\alpha\in\Lambda_0}$ covers $X_a$. Hence each slice $X_a$ is compact. Since $(Y,\tau_Y,B)$ is soft Hausdorff, each slice $Y_b$ is Hausdorff. Therefore, for every $a\in A$, the slice map $q_a \colon X_a \rightarrow Y_{e(a)}$ is a surjective continuous map from a compact space to a Hausdorff space, and hence is a quotient map \cite[Corollary~4.4]{armstrong2013basic}. As both $\tau_X$ and $\tau_Y$ are slice-determined soft topologies, Theorem~\ref{sliceimplysoftquotirnt} applies and yields that $q$ is a soft $e$-quotient map.
\end{proof}

\subsection{Universal property}
Now we describe the universal property of the soft quotient space.
\begin{theorem}
\label{thm:soft-universal}
Let $(X,\tau_X,A)$, $(Y,\tau_Y,B)$ and $(Z,\tau_Z,C)$ be soft spaces. Let $q:X\to Y$ be a soft $e$-quotient map. Suppose $(g,d):(X,\tau_X,A)\to(Z,\tau_Z,C)$ be a soft map such that:
\begin{enumerate}[noitemsep,topsep=0pt,parsep=0pt]
\item $g$ is constant on the fibres of $q$, i.e.\ $q(x)=q(x')$ implies $g(x)=g(x')$ for all $x,x'\in X$;
\item The map $d:A\to C$ is constant on each fibre of $e$. Equivalently, since $e$ is surjective, there exists a (necessarily unique) parametric map $h:B\to C$ such that $d = h\circ e$.
\end{enumerate}
Then,
\begin{enumerate}[label=(\alph*),noitemsep,topsep=0pt,parsep=0pt]
\item There exists a unique map $f:Y\to Z$ such that $f\circ q = g$;
\begin{center}
 \[
\begin{tikzcd}[column sep=large, row sep=large]
(X,\tau_{X},A) \ar[d, swap, "(q\text{,}e)"] \ar[rd, "(g\text{,}d)"]\\
(Y,\tau_{Y},B) \ar[dashed, r,  "(f\text{,}h)"'] & {(Z,\tau_{Z},C).}
\end{tikzcd}
\]
\end{center}
\item $f$ is soft $h$-continuous if and only if $g$ is soft $d$-continuous;
\item $f$ is a soft $h$-quotient map if and only if $g$ is a soft $d$-quotient map.
\end{enumerate}
\end{theorem}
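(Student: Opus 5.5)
The whole argument rests on one identity of inverse soft images. Once $f$ is constructed, $g=f\circ q$ and $d=h\circ e$ give
\[
\phi^{-1}_{g,d}=\phi^{-1}_{q,e}\circ\phi^{-1}_{f,h}.
\]
This is the same identity used in the composition theorem for soft quotient maps. Parts (b) and (c) then follow by feeding this identity into the defining property of the soft $e$-quotient map $(q,e)$.

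For (a), soft surjectivity of $(q,e)$ means $q$ and $e$ are surjective. Every $y\in Y$ is therefore of the form $q(x)$, and we set $f(y)=g(x)$. Hypothesis (1) says this does not depend on the choice of $x$, and by construction $f\circ q=g$. If $f'$ also satisfies $f'\circ q=g$, then $f'(q(x))=f(q(x))$ for all $x$, so $f'=f$ by surjectivity of $q$. The parametric map $h$ is already given uniquely by hypothesis (2), again using surjectivity of $e$.

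Next I would verify the identity slicewise. For a soft set $(K,C)$ over $Z$ and $a\in A$,
\[
\phi^{-1}_{g,d}(K,C)(a)=g^{-1}\bigl(K(d(a))\bigr)=q^{-1}\Bigl(f^{-1}\bigl(K(h(e(a)))\bigr)\Bigr)=\phi^{-1}_{q,e}\bigl(\phi^{-1}_{f,h}(K,C)\bigr)(a),
\]
since $\phi^{-1}_{f,h}(K,C)$ has $b$-slice $f^{-1}(K(h(b)))$.

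For (b), suppose first that $f$ is soft $h$-continuous, and let $(K,C)\in\tau_Z$. Then $\phi^{-1}_{f,h}(K,C)\in\tau_Y$. Since $q$ is soft $e$-continuous, the identity gives $\phi^{-1}_{g,d}(K,C)\in\tau_X$, so $g$ is soft $d$-continuous. Conversely, suppose $g$ is soft $d$-continuous and $(K,C)\in\tau_Z$. Then $\phi^{-1}_{q,e}\bigl(\phi^{-1}_{f,h}(K,C)\bigr)=\phi^{-1}_{g,d}(K,C)\in\tau_X$. The quotient property of $(q,e)$ (the ``if'' direction) then forces $\phi^{-1}_{f,h}(K,C)\in\tau_Y$, so $f$ is soft $h$-continuous.

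For (c), suppose first that $f$ is a soft $h$-quotient map. Then $g=f\circ q$, $d=h\circ e$ is a soft quotient map by the composition theorem of the subsection on compositions and restrictions.

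The converse of (c) is where I expect the only care to be needed, namely the surjectivity bookkeeping. Suppose $(g,d)$ is a soft $d$-quotient map. Soft surjectivity of $(g,d)$ gives that $g$ and $d$ are surjective. From $g=f\circ q$ and $d=h\circ e$ it follows that $f$ and $h$ are surjective, so $(f,h)$ is soft surjective by the remark characterising surjectivity of $\phi$. It then remains to chain the equivalences for any $(K,C)$:
\[
(K,C)\in\tau_Z\iff\phi^{-1}_{g,d}(K,C)\in\tau_X\iff\phi^{-1}_{f,h}(K,C)\in\tau_Y.
\]
The first equivalence is the quotient property of $(g,d)$. The second is the identity combined with the quotient property of $(q,e)$. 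Together they show that $f$ is a soft $h$-quotient map.
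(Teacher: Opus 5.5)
Your proposal is correct and follows essentially the same route as the paper. You construct $f$ using the surjectivity of $q$, then derive (b) and (c) from the identity $\phi^{-1}_{g,d}=\phi^{-1}_{q,e}\circ\phi^{-1}_{f,h}$ together with the quotient property of $(q,e)$. The paper's ``same argument applied in reverse'' for the remaining direction of (c) amounts to your appeal to the composition theorem, and your explicit check that $f$ and $h$ are surjective (so that $(f,h)$ is soft surjective) fills in a detail the paper leaves implicit.
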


\begin{proof}
(a) Since $g$ is constant on the fibres of $q$, for each $y\in Y$ the set $q^{-1}(y)$ is mapped by $g$ to a single point of $Z$. Define $f(y):=g(x)$ for any $x\in q^{-1}(y)$. This is well defined by the hypothesis. Uniqueness of $f$ follows from the surjectivity of $q$.

\par(b) If $f$ is soft $h$-continuous, then $(g,d)=(f,h)\circ(q,e)$ is soft continuous, since the composition of soft continuous maps is again soft continuous. Conversely, assume $g$ is soft $d$-continuous and let $(H,C)\in\tau_Z$. Using $g=f\circ q$ and $d=h\circ e$, we have $\phi^{-1}_{g,d}(H,C) = \phi^{-1}_{q,e}\bigl(\phi^{-1}_{f,h}(H,C)\bigr).$ Since $\phi^{-1}_{g,d}(H,C)\in\tau_X$ and $(q,e)$ is a soft $e$-quotient map, it follows that $\phi^{-1}_{f,h}(H,C)\in\tau_Y$. Hence $f$ is soft $h$-continuous.
\par(c) Suppose first that $g$ is a soft $d$-quotient map. Then for any soft set $(H,C)$ on $Z$, $(H,C)\in\tau_Z \iff \phi^{-1}_{g,d}(H,C)\in\tau_X.$ Using the identity $\phi^{-1}_{g,d} = \phi^{-1}_{q,e}\circ\phi^{-1}_{f,h}$ and the fact that $q$ is a soft $e$-quotient map, we obtain $\phi^{-1}_{g,d}(H,C)\in\tau_X \iff \phi^{-1}_{f,h}(H,C)\in\tau_Y.$ Hence $f$ is a soft $h$-quotient map. Conversely, if $f$ is a soft $h$-quotient map, the same argument applied in reverse, shows that $g$ is a soft $d$-quotient map.
\end{proof}
Recall that any surjective map $g:X\to Z$ induces an equivalence relation on $X$ by $x\sim x'$ if and only if $g(x)=g(x')$.
\begin{corollary}
Let $(g,d):(X,\tau_X,A)\to(Z,\tau_Z,C)$ be a soft continuous map such that $g:X\to Z$ is surjective, and let $\sim$ be the equivalence relation on $X$ induced by $g$. Let $q:X\to X/{\sim}$ denote the canonical projection, and equip $X/{\sim}$ with the soft $\id_A$-quotient topology $\tau_{X/{\sim}}$ induced by $(q,\id_A)$. Then,
\begin{enumerate}[label=(\alph*),noitemsep,topsep=0pt,parsep=0pt]
\item there exists a unique bijective map $f:X/{\sim}\to Z$ such that $f\circ q=g$;
\item if $d\colon A\to C$ is bijective, then $(f,d):(X/{\sim},\tau_{X/{\sim}},A)\to(Z,\tau_Z,C)$ is a soft homeomorphism if and only if $(g,d)$ is a soft quotient map.
\end{enumerate}
\end{corollary}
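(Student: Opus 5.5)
We apply Theorem~\ref{thm:soft-universal} with $(Y,\tau_Y,B)=(X/{\sim},\tau_{X/{\sim}},A)$, $(q,e)=(q,\id_A)$ and $h=d$. This choice satisfies hypothesis (2) trivially, since $d=d\circ\id_A$. Hypothesis (1) holds by the definition of $\sim$: if $q(x)=q(x')$ then $x\sim x'$, so $g(x)=g(x')$. The pair $(q,\id_A)$ is a soft quotient map by Theorem~\ref{prop3.3}, because $q$ and $\id_A$ are surjective.

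For part (a), Theorem~\ref{thm:soft-universal}(a) gives a unique $f$ with $f\circ q=g$. Surjectivity of $f$ follows from surjectivity of $g$. For injectivity, suppose $f([x])=f([x'])$. Then $g(x)=g(x')$, so $x\sim x'$ and $[x]=[x']$.

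For part (b), assume $d$ is bijective. By the Remark after Definition~\ref{softlevel}, $\phi_{f,d}$ is then inverse to $\phi_{f^{-1},d^{-1}}$. Concretely, for a soft set $(G,A)$ over $X/{\sim}$ we have
\[
\phi_{f,d}(G,A)=\phi^{-1}_{f^{-1},d^{-1}}(G,A),
\]
since both sides assign to $c\in C$ the set $f(G(d^{-1}(c)))$. Hence soft $d^{-1}$-continuity of $f^{-1}$ is equivalent to $f$ being a soft $d$-open map. The following chain of equivalences, plus this observation, gives part (b):

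\begin{enumerate}[noitemsep,topsep=0pt,parsep=0pt]
\item By Theorem~\ref{thm:soft-universal}(c), $(g,d)$ is a soft quotient map if and only if $(f,d)$ is a soft $d$-quotient map.
\item If $(f,d)$ is a soft homeomorphism, then it is continuous, surjective and soft $d$-open. Theorem~\ref{thm:open-closed-quotient} then shows it is a soft $d$-quotient map.
\item Conversely, suppose $(f,d)$ is a soft $d$-quotient map. It is continuous, and $f$ is a bijection. Let $(G,A)$ be soft open. Set $(H,C)=\phi_{f,d}(G,A)$. Because $f$ and $d$ are bijections, $\phi^{-1}_{f,d}(H,C)=(G,A)$, which is soft open. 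The quotient property then gives $(H,C)\in\tau_Z$. So $f$ is soft $d$-open, and $f^{-1}$ is soft $d^{-1}$-continuous.
\end{enumerate}

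The main care point is the identity $\phi^{-1}_{f,d}\circ\phi_{f,d}=\mathrm{id}$ when both $f$ and $d$ are bijective. This holds because $f^{-1}(f(G(d^{-1}(d(a)))))=G(a)$, using injectivity of $f$ and $d$.
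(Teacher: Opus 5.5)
Your proposal is correct and takes the same route as the paper. Both arguments apply Theorem~\ref{thm:soft-universal} to $(q,\id_A)$ with $h=d$, obtain the bijection $f$, and reduce (b) to part (c) of that theorem together with the fact that, when $d$ is bijective, a bijective soft $d$-quotient map is exactly a soft homeomorphism. The paper only asserts this last fact (``being $f$ bijective''), whereas you verify it via $\phi_{f,d}=\phi^{-1}_{f^{-1},d^{-1}}$ (so soft $d$-openness of $f$ is soft $d^{-1}$-continuity of $f^{-1}$) and $\phi^{-1}_{f,d}\circ\phi_{f,d}=\id$.
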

\begin{proof} The hypotheses of Theorem~\ref{thm:soft-universal} are satisfied. Therefore, there exists a unique map $f:X/\!\sim\to Z$ such that $f\circ q=g$. Explicitly, $f([x])=g(x)$ for each equivalence class $[x]\in X/\!\sim$. Assertion (a) follows immediately from the definition of $\sim$ and the surjectivity of $g$. Assertion~(b) follows directly from Theorem~\ref{thm:soft-universal}(b),(c), which imply that $(f,d)$ is soft continuous (respectively, a soft $d$-quotient map) if and only if $(g,d)$ has the corresponding property. Being $f$ bijective, $(f,d)$ is a soft homeomorphism if and only if $(g,d)$ is a soft quotient map.
\end{proof}

\section{Examples and constructions in soft quotient topology}
\label{sec:examples}
We now present several examples illustrating soft quotient spaces. Throughout this section, a fixed parameter set $E$ is used and for convenience, all parametric maps are assumed to be the identity unless stated otherwise. Let $I=[0,1]$ be equipped with its usual topology $\mathcal{T}_{\mathrm{std}}$. Define a soft topology $\tau_I$ on $(I,E)$ by
\[
\tau_I=\{(F,E): F(e)\in \mathcal{T}_{\mathrm{std}} \text{ for all } e\in E\}.\]
$\tau_I$ is called the \emph{standard soft topology} on $I$ and the soft space $(I,\tau_I,E)$ is referred to as the \emph{soft interval}. When no ambiguity arises, we denote this space simply by $I$. Note that $\tau_I$ is a slice-determined soft topology and that each slice $(\tau_I)_e$ is $\mathcal{T}_{\mathrm{std}}$.

\subsection{The soft circle as a quotient of the soft interval}
We first specify a soft topology on the unit circle. Let $\mathbb{S}^{1}=\{(x,y)\in\mathbb{R}^{2} : x^{2}+y^{2}=1\}$ endowed with its usual subspace topology. The \emph{standard soft topology} on $\mathbb{S}^1$ is defined by 
\[
\tau_{\mathbb{S}^{1}} = \{(F,E): F(e)\ \text{is open in}\ \mathbb{S}^{1}\ \text{for all}\ e\in E\}.\] The resulting soft space $(\mathbb{S}^{1},\tau_{\mathbb{S}^{1}},E)$ is referred to as the \emph{standard soft circle}. Note that $\tau_{\mathbb{S}^{1}}$ is also a slice-determined soft topology.
Consider the soft interval $(I,\tau_{I},E)$ introduced earlier. Define an equivalence relation $\sim$ on $I=[0,1]$ by identifying the two endpoints:
\[
x\sim y \iff x=y \ \text{ or }\ \{x,y\}=\{0,1\}.\] Let $\pi:I\to I/\!\sim$ denote the canonical projection onto the quotient set. We equip $(I/\!\sim,E)$ with the soft quotient topology $\tau_{\pi}$ induced by the soft map $(\pi,\id_{E})$ as described in Definition~\ref{soft!quotient}.
\begin{Theorem}
\label{prop:soft-circle}
The soft quotient space $(I/\!\sim,\tau_{\pi},E)$ is soft homeomorphic to the soft circle $(\mathbb{S}^{1},\tau_{\mathbb{S}^{1}},E)$.
\end{Theorem}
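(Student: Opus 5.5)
We will use the corollary to the universal property (Theorem~\ref{thm:soft-universal}). Take $g\colon I\to\mathbb{S}^1$, $g(t)=(\cos 2\pi t,\sin 2\pi t)$, with parametric map $d=\id_E$. The map $g$ is surjective, and the equivalence relation it induces on $I$ is exactly $\sim$, since $g(t)=g(s)$ iff $t=s$ or $\{t,s\}=\{0,1\}$. So $I/\!\sim$ with $\tau_\pi$ is precisely the quotient appearing in that corollary. Part (a) then gives a bijection $f\colon I/\!\sim\to\mathbb{S}^1$ with $f\circ\pi=g$. Since $\id_E$ is bijective, part (b) shows that $(f,\id_E)$ is a soft homeomorphism once we prove that $(g,\id_E)\colon(I,\tau_I,E)\to(\mathbb{S}^1,\tau_{\mathbb{S}^1},E)$ is a soft quotient map.

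To prove that, we use Theorem~\ref{sliceimplysoftquotirnt}. Both $\tau_I$ and $\tau_{\mathbb{S}^1}$ are slice-determined, and every slice is the standard topology. Also $(g,\id_E)$ is soft surjective because $g$ and $\id_E$ are both surjective. It therefore suffices to check that each slice map $g_e\colon(I,\mathcal{T}_{\mathrm{std}})\to\mathbb{S}^1$ is a classical quotient map. This is the classical fact that $g$ is a continuous surjection from a compact space onto a Hausdorff space, hence closed and so a quotient map \cite[Corollary~4.4]{armstrong2013basic}.

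Alternatively, one can argue directly in the soft setting. Continuity of $g$ in the soft sense is immediate: for $(H,E)\in\tau_{\mathbb{S}^1}$, each slice of $\phi^{-1}_{g,\id_E}(H,E)$ is $g^{-1}(H(e))$, which is open, so the inverse image lies in $\tau_I$. For the converse, if $\phi^{-1}_{g,\id_E}(H,E)\in\tau_I$, then each $g^{-1}(H(e))$ is open in $I$. Because $g$ is a classical quotient map, each $H(e)$ is open, and so $(H,E)\in\tau_{\mathbb{S}^1}$ because $\tau_{\mathbb{S}^1}$ is slice-determined.

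The only point requiring care is the slice-determined hypothesis on the codomain, which is exactly what lets us pass from slice-wise openness back to soft openness. Here it holds by the definition of $\tau_{\mathbb{S}^1}$, so there is no real obstacle. It remains only to observe that the conclusion of the corollary gives a soft $\id_E$-homeomorphism, which is the soft homeomorphism required by the statement.
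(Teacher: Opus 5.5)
Your proposal is correct and follows the same route as the paper. Like the paper, you show that $t\mapsto(\cos 2\pi t,\sin 2\pi t)$ is a soft $\id_E$-quotient map via Theorem~\ref{sliceimplysoftquotirnt}, since both topologies are slice-determined and each slice map is a classical quotient map, and then invoke the universal property (its corollary) to get the induced soft $\id_E$-homeomorphism; you also fill in what the paper leaves implicit, namely that the fibres of $g$ are exactly the $\sim$-classes and that each slice map is a quotient map by the compact-to-Hausdorff criterion.
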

\begin{proof}
Define $(h,Id_{E})\colon (I,\tau_{I},E)\rightarrow (\mathbb{S}^{1},\tau_{\mathbb{S}^{1}},E)$ by $t\mapsto (\cos 2\pi t,\;\sin 2\pi t)$, which is quotient map in each slice. Hence by Theorem~\ref{sliceimplysoftquotirnt}, $h$ is soft $\id_{E}$-quotient map. So, by universal property the induced map $\widetilde{h} : (I/\!\sim,\tau_{\pi},E) \rightarrow (\mathbb{S}^{1},\tau_{\mathbb{S}^{1}},E)$ defined by $\widetilde{h}([t]):=(\cos 2\pi t,\;\sin 2\pi t)$ is a soft $\id_{E}$-homeomorphism.
\end{proof}
 \begin{remark}
Unlike the uniform circle slices obtained in Theorem~\ref{prop:soft-circle}, Example~\ref{ex:parameter-dependent-homotopy} demonstrates that soft quotient topologies can yield distinct slice-wise homotopy types using the same equivalence relation.
\end{remark}
\begin{example}
\label{ex:parameter-dependent-homotopy}
Let $X=[0,1]$ and $E=\{a,b\}$. We define a soft topology $\tau_X$ on $X$ by specifying the slice topologies: $(\tau_X)_a = \mathcal{T}_{\mathrm{std}}, (\tau_X)_b = \{\emptyset, X\}.$ Let $\sim$ be the equivalence relation on $X$ identifying the endpoints $0 \sim 1$. Let $Y = X/\!\sim$ be the quotient set and $q: X \to Y$ the canonical projection. We equip $Y$ with the soft quotient topology $\tau_{q, \id_E}$. By Corollary~\ref{cor:slice-quotient-topology}, the topology on each slice $Y_e$ coincides with the classical quotient topology induced by the slice map $q_e: X_e \to Y_e$, for $e\in E$.
\begin{itemize}[noitemsep,topsep=0pt,parsep=0pt]
    \item At $e=a$: The map $q_a$ identifies the endpoints of the Euclidean interval, so $Y_a$ is homeomorphic to the circle $S^1$.
    \item At $e=b$:  Since $X_b$ has the indiscrete topology, the quotient topology on $Y_b$ is also indiscrete, and hence $Y_b$ is contractible.
\end{itemize}
Thus, the resulting soft quotient space has slices of distinct homotopy types, $Y_a \simeq S^1,  Y_b \simeq *,$ despite the underlying quotient map $q$ being independent of the parameter.
\end{example}

\subsection{Collapsing a soft subspace to a point}
Let $(X,\tau_X,E)$ be a soft space and $A\subseteq X$ a non-empty subset. Define an equivalence relation on $X$ by $x\sim y \iff \text{$x=y$ or $x,y\in A$}.$ Let $X/A=X/{\sim}$ and endow it with the soft quotient topology induced by $(\pi,\id_E)$, where $\pi:X\to X/A$ is the canonical projection. The resulting soft space $(X/A,\tau_\pi,E)$ is called the soft quotient space obtained from $(X,\tau_X,E)$ by \emph{collapsing $A$ to a single point}.
\begin{theorem}
If $\tau_X$ is a slice-determined soft topology, then for each parameter $e\in E$, the slice $(X/A)_e$ of the soft quotient space $(X/A,\tau_\pi,E)$ coincides with the classical quotient space of $X_e$ obtained by collapsing the subset $A\subseteq X_e$ to a single point.
\end{theorem}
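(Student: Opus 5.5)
This is a direct specialisation of Theorem~\ref{slicequotient}, itself an instance of Theorem~\ref{prop:slice-quotient-general} with $(q,e)=(\pi,\id_E)$. We apply these results to the equivalence relation $\sim$ whose only non-trivial class is $A$. The one thing to check is that "the classical quotient of $X_e$ by $\sim$" is literally the space obtained by collapsing $A\subseteq X_e$ to a point. Both are defined on the same set $X/A$ via the same set map $\pi_e=\pi$, so this holds tautologically.

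First, since the parametric map is $\id_E$, we have $\phi^{-1}_{\pi,\id_E}(G,E)(e)=\pi^{-1}(G(e))$ for every $e\in E$. We would recall the argument of Theorem~\ref{prop:slice-quotient-general} to make the two inclusions explicit.

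For the first inclusion, suppose $U$ lies in the slice topology $(\tau_\pi)_e$. Then $U=G(e)$ for some $(G,E)\in\tau_\pi$. Hence $\pi^{-1}(U)=\phi^{-1}_{\pi,\id_E}(G,E)(e)\in(\tau_X)_e$, so $U$ is open in the classical quotient $X_e/A$.

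For the reverse inclusion, suppose $\pi^{-1}(U)\in(\tau_X)_e$. Define $(G,E)$ by $G(e)=U$ and $G(e')=\emptyset$ for $e'\neq e$. Its inverse soft image has slice $\pi^{-1}(U)$ at $e$ and $\emptyset$ at every other parameter. Every slice is therefore open in the corresponding slice topology, and because $\tau_X$ is slice-determined, this soft set lies in $\tau_X$. By the definition of $\tau_\pi$, $(G,E)\in\tau_\pi$, and hence $U\in(\tau_\pi)_e$.

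The only substantive point is this last step, which constructs a soft open set with a prescribed single slice. It is exactly where the slice-determined hypothesis is needed; without it, Example~\ref{softquotintnotslicewise} shows the conclusion can fail. Everything else is bookkeeping, so the proof reduces to citing Theorem~\ref{slicequotient} and noting that collapsing $A$ is the quotient by this particular $\sim$.
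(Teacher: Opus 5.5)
Your proposal is correct and matches the paper's proof, which simply cites Theorem~\ref{prop:slice-quotient-general} with $(q,e)=(\pi,\id_E)$. Your explicit re-derivation is still worthwhile, because the step ``all other slices of $\phi^{-1}_{\pi,\id_E}(G,E)$ are empty'' genuinely uses injectivity of $\id_E$; for a non-injective parametric map that step, and with it Theorem~\ref{prop:slice-quotient-general}, fails even when $\tau_X$ is slice-determined.

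One small correction to your closing aside. The failure in Example~\ref{softquotintnotslicewise} persists if its $\tau_X$ is replaced by the slice-determined topology with the same slices, so it comes from the non-injective parametric map and does not show that slice-determinedness is needed here. A genuine $\id_E$ witness is $X=\{0,1,2\}$ with $\{1,2\}$ collapsed and $E=\{a,b\}$. Take $\tau_X$ to consist of $(0,E)$, $(1,E)$ and the soft set with slice $\{0\}$ at $a$ and $\{1\}$ at $b$. Then $(\tau_\pi)_a$ is indiscrete, whereas in the classical quotient of $X_a$ the point $[0]$ is open.
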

\begin{proof}
The proof follows from Theorem~\ref{prop:slice-quotient-general}.
\end{proof}
This construction appears later in the definitions of cones and suspensions in the soft setting.

\subsection{Gluing Construction}
We now describe the gluing two soft spaces along soft subspaces. Let $(X,\tau_X,A)$ and $(Y,\tau_Y,B)$ be soft spaces. Suppose $A_X \subseteq X$ is a soft subspace with parameters $A$ and $A_Y \subseteq Y$ is a soft subspace with parameters $B$ and $(\theta,h):(A_X,\tau_{A_X},A)\to(A_Y,\tau_{A_Y},B)$ is soft continuous. Form the disjoint union of sets $X \sqcup Y$. Its parameter set is taken to be $A \sqcup B$, and a soft set $(F\sqcup G,A\sqcup B)$ on $X\sqcup Y$ is defined by: 
\[
(F\sqcup G)(c) = 
\begin{cases}
F(c), & if\quad c\in A;\\
G(c), & if\quad c\in B;
\end{cases}
\]
where $(F,A)\in \tau_X$ and $(G,B)\in \tau_Y$. The coproduct soft topology on $X \sqcup Y$ is then given by 
\[
\tau_{X\sqcup Y} = \bigl\{\, (F \sqcup G, A \sqcup B) \; : \; (F,A)\in\tau_{X} \ \text{and} \ (G,B)\in\tau_{Y} \,\bigr\}.
\]
Define an equivalence relation $\sim$ on $X\sqcup Y$ by declaring: $x \sim y \iff \text{$x\in A_X$, $y=\theta(x)\in A_Y$}.$ The canonical projection $\pi : X\sqcup Y \rightarrow (X\sqcup Y)/{\sim}$ is accompanied by the parametric map $\id_{A \sqcup B}$.
\begin{definition}
The \emph{soft gluing space} of $(X,\tau_X,A)$ and $(Y,\tau_Y,B)$ along $(A_X,A)$ and $(A_Y,B)$ via $(\theta,h)$ is the soft space $X\cup_{\theta,h} Y := \bigl( (X\sqcup Y)/{\sim},\; \tau_{\pi},\; A\sqcup B \bigr)$ where $\tau_\pi$ is the soft $\id_{A\sqcup B}$-quotient topology induced by $(\pi,\id_{A\sqcup B})$.
\end{definition}

\begin{Theorem}
Let $(Z,\tau_Z,C)$ be a soft space. Let $(f,u):(X,\tau_X,A)\to(Z,\tau_Z,C)$ and $(g,v):(Y,\tau_Y,B)\to(Z,\tau_Z,C)$ be soft maps such that 
\[
f|_{A_X} = g\circ \theta, \qquad\text{and}\qquad v\circ h = u \quad\text{on }A.
\]
Then there exists a unique soft $u\sqcup v$-continuous map  $w : X\cup_{\theta,h}Y \rightarrow Z$ such that $w\circ\iota_X = f, w\circ \iota_Y = g,$ where $\iota_X,\iota_Y$ are the canonical inclusions into $X\sqcup Y$.
\end{Theorem}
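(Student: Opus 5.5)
The plan is to reduce the statement to the universal property, Theorem~\ref{thm:soft-universal}, applied to the soft quotient map $(\pi,\id_{A\sqcup B})\colon (X\sqcup Y,\tau_{X\sqcup Y},A\sqcup B)\to X\cup_{\theta,h}Y$. The continuity hypotheses are implicit in the statement: $(f,u)$ and $(g,v)$ should be soft continuous, since otherwise $w$ could not be. Under that reading, first define the combined map $F := f\sqcup g\colon X\sqcup Y\to Z$ with parametric map $d := u\sqcup v\colon A\sqcup B\to C$. Concretely, $F|_X=f$, $F|_Y=g$, $d|_A=u$ and $d|_B=v$. By construction, $F\circ\iota_X=f$ and $F\circ\iota_Y=g$.

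Next, check the two hypotheses of Theorem~\ref{thm:soft-universal}.
\begin{enumerate}
\item $F$ is constant on the fibres of $\pi$. The equivalence relation is generated by $x\sim\theta(x)$ for $x\in A_X$. Since $F(x)=f(x)=g(\theta(x))=F(\theta(x))$ by $f|_{A_X}=g\circ\theta$, $F$ respects the generating relations, and hence respects the equivalence relation they generate.
\item The parametric map of $\pi$ is $\id_{A\sqcup B}$, whose fibres are singletons. So $d$ is trivially constant on them, with $h=d$.
\end{enumerate}
Theorem~\ref{thm:soft-universal}(a) then gives a unique $w$ with $w\circ\pi=F$, that is, $w\circ\pi\circ\iota_X=f$ and $w\circ\pi\circ\iota_Y=g$. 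This is the intended meaning of $w\circ\iota_X=f$, up to composing with $\pi$. Uniqueness follows because $\pi$ is surjective and $X\sqcup Y$ is covered by the images of the two inclusions.

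For continuity, by Theorem~\ref{thm:soft-universal}(b) it suffices to show that $F$ is soft $(u\sqcup v)$-continuous. Take $(H,C)\in\tau_Z$. Then $\phi^{-1}_{F,d}(H,C)$ is the soft set on $A\sqcup B$ with the following slices:
\begin{itemize}
\item at $c\in A$: $F^{-1}(H(u(c)))$, which intersects $X$ in $f^{-1}(H(u(c)))$;
\item at $c\in B$: the analogous set, intersecting $Y$ in $g^{-1}(H(v(c)))$.
\end{itemize}
This is exactly $\phi^{-1}_{f,u}(H,C)\sqcup\phi^{-1}_{g,v}(H,C)$ in the coproduct sense. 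Both pieces are soft open by continuity of $f$ and $g$, so the disjoint union lies in $\tau_{X\sqcup Y}$.

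The main obstacle is this last identification. The coproduct soft sets $(F\sqcup G)(c)$ take values in $X$ for $c\in A$ and in $Y$ for $c\in B$, whereas the inverse image $F^{-1}(H(d(c)))$ a priori contains points of both $X$ and $Y$. I would first try to resolve this by interpreting the coproduct topology slice-wise: the slice at $c\in A$ is the subset of the $X$-summand, so the $Y$-part is discarded there. I would state that convention explicitly, and if needed adjust it so that slices at $c\in A$ are regarded as subsets of $X\sqcup Y$ with empty $Y$-part. The continuity check then reduces cleanly to the soft continuity of $f$ and $g$ separately.
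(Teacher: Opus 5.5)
Your route is the paper's: form $f\sqcup g$ with parametric map $u\sqcup v$, check that it is constant on the classes of the equivalence relation generated by $x\sim\theta(x)$, and invoke Theorem~\ref{thm:soft-universal}. The paper's proof says exactly this and nothing more. Your treatment is more careful than the paper's on four points: the generated relation, the missing $\pi$ in $w\circ\pi\circ\iota_X=f$, the implicit continuity of $(f,u)$ and $(g,v)$, and the continuity of $f\sqcup g$.

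The obstacle you single out is real, however, and your fallback resolves it in the wrong direction. Suppose the $Y$-part of every $A$-slice and the $X$-part of every $B$-slice are forced to be empty, as in the paper's literal definition of $\tau_{X\sqcup Y}$. Then $\tau_{X\sqcup Y}$ does not contain $(1,A\sqcup B)$, so it is not a soft topology. Moreover $\phi^{-1}_{f\sqcup g,\,u\sqcup v}(1,C)=(1,A\sqcup B)$, so $f\sqcup g$ is never soft continuous, whatever $f$ and $g$ are. The same defect stops $\tau_\pi$ from being a soft topology, since $(1,A\sqcup B)$ on the quotient pulls back to $(1,A\sqcup B)$. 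So Theorem~\ref{prop3.3} and Theorem~\ref{thm:soft-universal} do not even apply.

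What works is to make your ``discard the other summand'' idea the definition. Let $\tau_{X\sqcup Y}$ consist of all $(K,A\sqcup B)$ such that $a\mapsto K(a)\cap X$ lies in $\tau_X$ and $b\mapsto K(b)\cap Y$ lies in $\tau_Y$. The cross parts $K(a)\cap Y$ for $a\in A$ and $K(b)\cap X$ for $b\in B$ are left arbitrary.
\begin{itemize}
\item This is a soft topology, because intersecting with a summand commutes with unions and intersections.
\item Its members with empty cross parts are exactly the paper's soft sets $(F\sqcup G,A\sqcup B)$.
\item With it, your slice computation (intersect the $c$-slice of $\phi^{-1}_{f\sqcup g,\,u\sqcup v}(H,C)$ with $X$ for $c\in A$ and with $Y$ for $c\in B$) shows that this soft set is open whenever $(f,u)$ and $(g,v)$ are soft continuous.
\end{itemize}
State this redefinition at the outset rather than as a contingency, since the whole construction depends on it. With it, the hypothesis $v\circ h=u$ is not needed; neither your argument nor the paper's uses it.
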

\begin{proof}
Define $f\sqcup g : X\sqcup Y\to Z$ by restriction. The compatibility condition $f|_{A_X}=g\circ\theta$ tells that $f\sqcup g$ is constant on $\sim$-equivalence classes. The conclusion now follows from the universal property.
\end{proof}

\subsection{Soft cone and soft suspension}
\label{sec:cones-suspensions}
Using soft quotient spaces, we define soft analogues of the cone and suspension with a fixed parameter set $E$ and identity parametric maps. Let $(X, \tau_X, E)$ be a soft space and $I$ be the soft interval.
\begin{definition} \label{def:soft-cone}
The \emph{soft cone} on $X$ is the soft quotient space 
\[
\mathrm{Cone}(X) := ((X \times I) / (X \times \{1\}), \tau_\pi, E)\]obtained by collapsing the subspace $X \times \{1\}$ to a point.
\end{definition}
The following theorem establishes that, for slice-determined soft topologies, the parametric slices of the soft cone coincide with classical cones of the underlying space.
\begin{Theorem}\label{cone:slice}
For each $e\in E$, the slice $\mathrm{Cone}(X)_e$ is homeomorphic to the classical cone on the slice $X_e$, if $\tau_{X}$ is a slice-determined soft topology.
\end{Theorem}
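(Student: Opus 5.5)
The plan is to reduce the statement to the theorem on collapsing a soft subspace to a point, proved in the previous subsection. That theorem says: if the soft topology on the space being collapsed is slice-determined, then each slice of the collapsed space is the classical quotient of the corresponding slice. Here the space being collapsed is the soft product $X\times I$ and the subspace is $X\times\{1\}$. So two things must be checked:
\begin{enumerate}
\item the soft topology $\tau_{X\times I}$ is slice-determined;
\item for each $e\in E$, its $e$-slice is the classical product topology on $X_e\times I$.
\end{enumerate}
Once these hold, the collapsing theorem gives $\mathrm{Cone}(X)_e=(X_e\times I)/(X_e\times\{1\})$ with the classical quotient topology, which is the classical cone on $X_e$. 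The homeomorphism is then the identity on underlying sets.

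Following the convention of Section~\ref{sec:cones-suspensions}, I read the product with the single parameter set $E$. The soft topology on $X\times I$ is generated by the soft sets $(H,E)$ with $H(e)=F(e)\times G(e)$, where $(F,E)\in\tau_X$ and $(G,E)\in\tau_I$. If the authors instead intend the parameter set $E\times E$ of the soft product, I would restrict attention to the diagonal parameters $(e,e)$, and the same argument applies verbatim.

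\textbf{Slice topology.} A soft open set is a union of finite intersections of generating rectangles. Its $e$-slice is therefore a union of sets of the form $U\times V$, with $U\in(\tau_X)_e$ and $V$ open in $I$. Hence $(\tau_{X\times I})_e$ is contained in the product topology. Conversely, take $U\in(\tau_X)_e$ and $V$ open in $I$. Choose any $(F,E)\in\tau_X$ with $F(e)=U$, and the soft open set $(G,E)$ of $I$ with $G(e)=V$ (available since $\tau_I$ is slice-determined). The rectangle built from $F$ and $G$ has $e$-slice $U\times V$. Taking unions, the $e$-slice topology is exactly the product topology $X_e\times I$.

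\textbf{Slice-determinedness.} One inclusion is automatic: every soft open set has open slices. For the other, let $(H,E)$ be a soft set over $X\times I$ with each $H(e)$ open in $X_e\times I$. Write
\[
H(e)=\bigcup_{j\in J_e}U_{e,j}\times V_{e,j}.
\]
For each pair $(e,j)$, define $F_{e,j}(e)=U_{e,j}$ and $F_{e,j}(e')=\emptyset$ for $e'\neq e$. This $(F_{e,j},E)$ is soft open precisely because $\tau_X$ is slice-determined. Build $(G_{e,j},E)$ on $I$ in the same way, using that $\tau_I$ is slice-determined. The soft union over all $(e,j)$ of the rectangles built from these pairs is exactly $(H,E)$, so $(H,E)$ is soft open.

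I expect this slice-determinedness step to be the main obstacle. It is where the hypothesis on $\tau_X$ is genuinely used, through the ability to isolate a single parameter with empty slices elsewhere. It is also where the ambiguity in the parameter set of the product must be handled carefully. With both facts in hand, the collapsing theorem applies directly and finishes the proof.
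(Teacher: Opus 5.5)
Your proof is correct and follows the paper's route. The paper's proof simply invokes Theorem~\ref{prop:slice-quotient-general}, and the collapsing theorem you cite is its instance $(q,e)=(\pi,\id_E)$. The paper leaves implicit exactly what you verify: that $\tau_{X\times I}$ is slice-determined and that its $e$-slice is the classical product topology on $X_e\times I$.

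One small correction to your commentary. In your single-parameter reading the isolation can be done entirely in the $I$-factor: take $(G,E)\in\tau_I$ with $G(e')=\emptyset$ for $e'\neq e$, paired with any $(F,E)\in\tau_X$ having $F(e)=U$. So slice-determinedness of $\tau_X$ is not actually needed there; it is genuinely used only in the $E\times E$ reading.
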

\begin{proof}
The result follows from Theorem~\ref{prop:slice-quotient-general}.
\end{proof}
Similarly, we define the soft suspension. Define an equivalence relation $\sim$ on $X\times I$ by 
\[
(x,t)\sim(x',t') \quad\iff\quad \bigl(t=t'=0\bigr)\ \text{or}\ \bigl(t=t'=1\bigr)\ \text{or}\ (x,t)=(x',t').
\]
Let $\Sigma X=(X\times I)/{\sim}$ with projection $\pi:X\times I\to\Sigma X$ and soft quotient topology induced by $(\pi,\id_E)$.
\begin{definition}
The \emph{soft suspension} of $X$ is the soft space $\mathrm{Susp}(X):=(\Sigma X,\tau_\pi,E).$
\end{definition}
Similar to Theorem~\ref{cone:slice}, under the slice-determined soft topology, each slice of a soft suspension agrees with the classical suspension of the underlying space, as stated in the next theorem.
\begin{Theorem}
For each $e\in E$, the slice $\mathrm{Susp}(X)_e$ is homeomorphic to the classical suspension of $X_e$, if $\tau_{X}$ is slice-determined soft topology.
\end{Theorem}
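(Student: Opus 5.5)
The plan mirrors the proof of Theorem~\ref{cone:slice}: reduce the claim to Theorem~\ref{prop:slice-quotient-general} (in the form of Theorem~\ref{slicequotient}), applied to the soft quotient map $(\pi,\id_E)\colon X\times I\to\Sigma X$. The one thing to pin down is the domain. The soft product $X\times I$ as defined in the preliminaries has parameter set $E\times E$. Following the convention of Section~\ref{sec:cones-suspensions} (fixed parameter set $E$, identity parametric maps), I will read $X\times I$ as carrying the parameter set $E$, with slice over $e$ equal to $X_e\times I$ (that is, restrict to the diagonal of $E\times E$). I will also use the slice-determined soft topology $\tau$ on $X\times I$ whose $e$-slice is the product topology of $X_e$ and $I$. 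With this reading, the slice of the domain at $e$ is the classical product $X_e\times I$, and this is exactly what Theorem~\ref{slicequotient} needs as input.

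\textbf{Step 1.} Verify that $\tau$ is slice-determined with slices $X_e\times I$. Since $\tau_X$ and $\tau_I$ are both slice-determined, a soft set whose every slice is open in $X_e\times I$ is a union of rectangles. Each such rectangle can be realised at its own parameter by a soft open rectangle that is empty at the other parameters. This realisation is allowed precisely because slice-determinedness lets us prescribe each slice independently. Hence every slice-wise open soft set is soft open, so $\tau$ is slice-determined.

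\textbf{Step 2.} Apply Theorem~\ref{slicequotient} (equivalently Corollary~\ref{cor:slice-quotient-topology}) to the equivalence relation $\sim$. It gives that the slice topology of $\mathrm{Susp}(X)_e$ is the classical quotient topology of $X_e\times I$ by $\sim$. Since $\sim$ does not depend on $e$, this quotient is by definition the classical suspension of $X_e$. The homeomorphism is then the identity on the underlying set $\Sigma X$, or, if one prefers a different model of the suspension, the map induced by the classical universal property.

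I expect Step~1, the identification of the soft product's slices and its slice-determinedness, to be the main obstacle. The statement is only true once the product is interpreted with parameter set $E$ and a slice-determined topology, so I would state that interpretation explicitly at the start of the proof. Everything after that is formal.
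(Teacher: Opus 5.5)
Your proposal is correct and takes the paper's route: the paper's proof is the one-line reduction to Theorem~\ref{prop:slice-quotient-general} for the map $(\pi,\id_E)$, which is your Step~2. Your Step~1 supplies a hypothesis the paper uses without comment, namely that $X\times I$, read over the diagonal parameter set $E$, carries a slice-determined soft topology whose slices are $X_e\times I$; your argument for this, realising each slice rectangle by soft open rectangles that are empty at the other parameters, is sound.
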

\begin{proof}
This is a consequence of Theorem~\ref{prop:slice-quotient-general}.
\end{proof}

\section{Group actions and soft orbit spaces}\label{sec:soft-actions}
In analogy with the classical setting, we introduce the notion of a soft orbit space by equipping the set of orbits with the soft quotient topology induced by the orbit projection. We also introduce a soft analogue of proper discontinuity and present several related observations. We begin by defining the notion of a soft group action on a soft topological space. Throughout this section, the parameter set $E$ is fixed.
\begin{definition}\label{def:soft-action}
Let $(X,\tau_X,E)$ be a soft topological space and $G$ be a group with identity $g_{0}$. A (left) \emph{soft action} of $G$ on $(X,\tau_X,E)$ is a map $\alpha : G \times X \rightarrow X,\qquad (g,x)\mapsto g\cdot x,$ such that:
\begin{enumerate}[noitemsep,topsep=0pt,parsep=0pt]
\item $e\cdot x = x$, for all $x\in X$;
\item $(gh)\cdot x = g\cdot (h\cdot x)$, for all $g,h\in G$ and $x\in X$;
\item for each $g\in G$, the map $\alpha_g : X \rightarrow X,\qquad \alpha_g(x)=g\cdot x,$ is soft $\id_E$-continuous.
\end{enumerate}
\end{definition}
\begin{remark}\label{softcontsofthome}
    By property (2) of Definition~\ref{def:soft-action}, it follows that for each $g \in G$, the map $\alpha_{g}: X \to X$ defined by $\alpha_{g}(x) = g \cdot x$ is a soft $\id_{E}$-homeomorphism, with its inverse $\alpha_{g^{-1}}$.
\end{remark}
Given a soft action of $G$ on $(X,\tau_X,E)$, we write $G\cdot x = \{g\cdot x : g\in G\}$ for the orbit of a point $x\in X$. This determines an equivalence relation $\sim_G$ on $X$ defined by 
\[
x \sim_G y \quad\iff\quad \exists\,g\in G \text{ such that } y = g\cdot x.
\]
\begin{remark}
For a left soft action $\alpha$ of a group $G$ on $(X,\tau_X,E)$, each $g \in G$ induces a map $\widetilde{\alpha}_g$ on $SS(X,E)$ defined by $\widetilde{\alpha}_g(F,E) := \phi_{\alpha_g,\id_E}(F,E),$ which we simply denote by $g \cdot (F,E)$.
\end{remark}
We now define the notions of a soft orbit space and a properly discontinuous soft action.
\begin{definition}
Let $(X,\tau_X,E)$ be a soft space with a soft action of a group $G$. The \emph{orbit space} $X/G$ is the set of equivalence classes under the relation $\sim_G$, and the canonical projection $\pi \colon X \to X/G, \quad x \mapsto G \cdot x,$ sends each point to its orbit. Endowing $(X/G,E)$ with the soft quotient topology $\tau_\pi$ induced by $(\pi,\id_E)$ (Definition~\ref{quotient_soft}) produces a soft topological space $(X/G,\tau_\pi,E)$, called the \emph{soft orbit space} of the action.
\end{definition}
\begin{example}\label{example}
Consider the real line $(\mathbb{R}, \tau_{\mathbb{R}}, E)$ with the standard soft topology. The group $(\mathbb{Z},+)$ acts on $\mathbb{R}$ by translation: $n \cdot x = x + n, \quad n \in \mathbb{Z},\ x \in \mathbb{R}.$ The resulting soft orbit space $(\mathbb{R}/\mathbb{Z}, \tau_\pi, E)$ is soft $\id_E$-homeomorphic to the standard soft circle. The homeomorphism is induced by the classical map $\mathbb{Z} \cdot t \mapsto e^{2\pi i t}$ since, the topologies on both soft spaces are slice-determined. It follows that $(f, \id_E)$ defines a soft $\id_E$-homeomorphism. Observe that by Theorem~\ref{slicequotient}, each slice of the soft orbit space is the classical orbit space of the corresponding slice. More generally, we have the following theorem.
\end{example}
\begin{Theorem}
Let $(X,\tau_X,E)$ be a soft space with a soft action of $G$, and let $(X/G,\tau_\pi,E)$ be the associated soft orbit space. If $\tau_{X}$ is a slice-determined soft topology, then for each parameter $e\in E$:
\begin{enumerate}[noitemsep,topsep=0pt,parsep=0pt]
\item the quotient map $\pi_e : X_e \rightarrow (X/G)_e$ is the classical orbit projection with respect to the $G$-action on $X_e$;
\item the slice topology $(\tau_\pi)_e$ on $(X/G)_e$ is the classical quotient topology induced by $\pi_e$.
\end{enumerate}
\end{Theorem}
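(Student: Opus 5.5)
The plan is to reduce both assertions to Theorem~\ref{slicequotient}, and through it to Theorem~\ref{prop:slice-quotient-general}, applied to the orbit projection $(\pi,\id_E)\colon (X,\tau_X,E)\to (X/G,\tau_\pi,E)$. Since the soft orbit space is by definition the soft quotient space of $(X,\tau_X,E)$ by the relation $\sim_G$ in the sense of Definition~\ref{soft!quotient}, all the hypotheses of those results hold once we use that $\tau_X$ is slice-determined.

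For (1), fix $e\in E$. First I will check that $G$ acts classically on each slice $X_e$ by homeomorphisms. By Definition~\ref{def:soft-action}(3) and Remark~\ref{softcontsofthome}, each $\alpha_g$ is a soft $\id_E$-homeomorphism. Given $U\in(\tau_X)_e$, choose $(F,E)\in\tau_X$ with $F(e)=U$. Then $\phi^{-1}_{\alpha_g,\id_E}(F,E)\in\tau_X$, and its $e$-slice is $\alpha_g^{-1}(U)$. Hence $\alpha_g\colon X_e\to X_e$ is continuous, with continuous inverse $\alpha_{g^{-1}}$. The classical orbit relation of this $G$-action on $X_e$ is literally $\sim_G$, because the action map $\alpha$ does not depend on $e$. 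Consequently the underlying set of $(X/G)_e$ is $X_e/G$, and the slice map $\pi_e$, which equals $\pi$ as a set map, is the classical orbit projection $x\mapsto G\cdot x$.

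For (2), I apply Theorem~\ref{slicequotient} with $\sim=\sim_G$. Because $\tau_X$ is slice-determined, the slice topology $(\tau_\pi)_e$ equals the classical quotient topology on $X_e/{\sim_G}$ induced by $\pi_e$. Together with (1), this identifies $(\tau_\pi)_e$ with the classical quotient topology of the orbit projection, so $(X/G)_e$ is the classical orbit space of $X_e$.

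I do not expect a serious obstacle. The only point needing care is the slicewise continuity of the action in (1), which makes ``classical orbit projection with respect to the $G$-action on $X_e$'' meaningful; it follows from the formula $\phi^{-1}_{\alpha_g,\id_E}(F,E)(e)=\alpha_g^{-1}(F(e))$. The slice-determined hypothesis is used only through Theorem~\ref{prop:slice-quotient-general}, in the step where a soft set supported on the single parameter $e$ is shown to be soft open.
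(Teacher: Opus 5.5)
Your proposal is correct and follows the paper's proof. The paper simply invokes Theorem~\ref{prop:slice-quotient-general} and Corollary~\ref{cor:slice-quotient-topology} for $(\pi,\id_E)$, which is exactly the reduction you make via Theorem~\ref{slicequotient}. Your extra check that each $\alpha_g$ is a homeomorphism of $X_e$, using $\phi^{-1}_{\alpha_g,\id_E}(F,E)(e)=\alpha_g^{-1}(F(e))$, fills in a detail the paper leaves implicit. Your claim that a soft set supported on the single parameter $e$ pulls back to one supported only at $e$ holds here precisely because the parametric map is $\id_E$.
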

\begin{proof}
The proof follows from Theorem~\ref{prop:slice-quotient-general} and Corollary~\ref{cor:slice-quotient-topology}.
\end{proof}
We now show that, as in the classical case, the orbit space projection is an open map whenever there is a soft action of a group on a soft topological space.
\begin{Theorem}
\label{prop:orbit-soft-open}
Let $(X,\tau_X,E)$ be a soft space equipped with a soft action of a group $G$, and let $(X/G,\tau_\pi,E)$ be the associated soft orbit space. Then the canonical projection $(\pi,\id_E) : (X,\tau_X,E) \rightarrow (X/G,\tau_\pi,E)$ is a soft $\id_E$-open map.
\end{Theorem}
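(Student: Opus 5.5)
We mimic the classical argument that saturating an open set by a group action stays open. Fix a soft open set $(F,E)\in\tau_X$ and put $(H,E)=\phi_{\pi,\id_E}(F,E)$, so that $H(e)=\pi(F(e))$ for each $e\in E$, since $\id_E$ is a bijection. By the definition of the soft quotient topology $\tau_\pi$, showing $(H,E)\in\tau_\pi$ amounts to showing $\phi^{-1}_{\pi,\id_E}(H,E)\in\tau_X$.

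The first step is to compute this inverse image slice by slice. For $e\in E$,
\[
\phi^{-1}_{\pi,\id_E}(H,E)(e)=\pi^{-1}\bigl(\pi(F(e))\bigr)=\bigcup_{g\in G} g\cdot F(e)=\bigcup_{g\in G}\alpha_g(F(e)),
\]
because a point lies in the saturation of $F(e)$ exactly when it is of the form $g\cdot x$ with $x\in F(e)$. Since the union of soft sets is taken slice-wise, this gives the soft identity
\[
\phi^{-1}_{\pi,\id_E}(H,E)=\bigsqcup_{g\in G} g\cdot(F,E)=\bigsqcup_{g\in G}\phi_{\alpha_g,\id_E}(F,E).
\]

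The second step is to show that each $g\cdot(F,E)$ is soft open. Here I would use Remark~\ref{softcontsofthome}: $\alpha_g$ is a soft $\id_E$-homeomorphism with inverse $\alpha_{g^{-1}}$. By the remark after Definition~\ref{softlevel}, bijectivity gives $\phi_{\alpha_g,\id_E}=\phi^{-1}_{\alpha_{g^{-1}},\id_E}$. Concretely, $\alpha_g(F(e))=\alpha_{g^{-1}}^{-1}(F(e))$. Soft $\id_E$-continuity of $\alpha_{g^{-1}}$ then yields $g\cdot(F,E)\in\tau_X$. Axiom (ii) of a soft topology closes the union, so $\phi^{-1}_{\pi,\id_E}(H,E)\in\tau_X$, and hence $(H,E)\in\tau_\pi$.

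The only point requiring care is the identification of the image of a soft set under $\alpha_g$ with the inverse image under $\alpha_{g^{-1}}$ at the level of soft sets. This identification is what turns continuity of $\alpha_{g^{-1}}$ into openness of $\alpha_g$. I expect it to be routine, because the parametric map is $\id_E$, so $e^{-1}(b)$ is a singleton and no unions over fibres of the parametric map arise. I do not anticipate a genuine obstacle beyond writing the saturation identity carefully.
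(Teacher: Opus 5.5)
Your proposal is correct and follows essentially the same route as the paper: both reduce openness of $\phi_{\pi,\id_E}(F,E)$ in $\tau_\pi$ to openness of its saturation $\bigsqcup_{g\in G} g\cdot(F,E)$ in $\tau_X$, using that each $\alpha_g$ is a soft $\id_E$-homeomorphism. You also write out the slice-wise saturation identity and the identification $\alpha_g(F(e))=\alpha_{g^{-1}}^{-1}(F(e))$, which the paper leaves implicit.
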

\begin{proof}
    Let $(F,E)$ be soft open over $X$. By Definition~\ref{opensoft}, the soft set $\phi_{\pi,\id_E}(F,E)$ over $X/G$ is soft open if its inverse soft image $\phi^{-1}_{\pi,\id_E}(\phi_{\pi,\id_E}(F,E))$ is soft open over $X$. We note the inverse image satisfies 
\[
\phi^{-1}_{\pi,\id_E}(\phi_{\pi,\id_E}(F,E)) = \bigsqcup_{g \in G} g \cdot (F,E),
\]
and each $g \cdot (F,E)$ is soft open over $X$, since $\alpha_g$ is a soft $\id_E$-homeomorphism, as observed in Remark~\ref{softcontsofthome}. Therefore, the soft set $\phi_{\pi,\id_E}(F,E)$ is soft open over $X/G$. This completes the proof.
\end{proof}

We now define soft analogue of proper discontinuity of group actions.
\begin{definition}
A soft action of $G$ on $(X,\tau_X,E)$ is called \emph{properly soft discontinuous} if, for every $x \in X$, there exists a soft open set $(U,E)$ over $X$ with $x \widetilde{\in} (U,E)$ such that 
\[
g \cdot (U,E) \,\sqcap\, (U,E) = (0,E) \quad \text{for all } g \neq g_{0}.
\]
\end{definition}
\begin{remark}
If a soft action of $G$ on $(X, \tau_X, E)$ is properly soft discontinuous, then the induced action on each slice $X_e$ is properly discontinuous in the classical sense. The converse does not hold in general, as shown in the following example.
\end{remark}
\begin{example}\label{ex:counter_proper_discontinuous}
Let $X=\mathbb{R}$ with parameter set $E=\mathbb{N}$, and let $G=\mathbb{Z}$ acts on $X$ as in Example~\ref{example}. Define a soft topology $\tau$ on $X$ by declaring $(F,E)$ open if:
\begin{enumerate}[noitemsep,topsep=0pt,parsep=0pt]
    \item $F(e)$ is open in $\mathbb{R}$ for all $e \in E$;
    \item whenever $x\widetilde{\in} (F,E)$, there exists $N \in \mathbb{N}$ such that $(x-e,x+e) \subseteq F(e)$ for all $e > N$.
\end{enumerate}

For each $e_0 \in E$, the slice topology coincides with the standard Euclidean topology, so the $\mathbb{Z}$-action is properly discontinuous on every slice. However, the action is not soft properly discontinuous. For $(F,E) \in \tau$ with $0 \widetilde{\in} (F,E)$, there exists $N\in \mathbb{N}$ such that $(-N-2,N+2)\subseteq F(N+2)$. Therefore, $0\in(N+1)\cdot F(N+2) \cap F(N+2),$ and hence $(N+1)\cdot (F,E) \sqcap (F,E) \neq (0,E)$, showing that the action fails to be properly soft discontinuous.
\end{example}
The following theorem gives a criterion for proper soft discontinuity in terms of the induced actions on the slices.
\begin{theorem}\label{conversetrue}
Let $(X, \tau_X, E)$ be a soft Hausdorff topological space equipped with a soft action of a finite group $G$. If the induced action on every slice $X_e$ is properly discontinuous in the classical sense, then the soft action of $G$ on $(X, \tau_X, E)$ is properly soft discontinuous.
\end{theorem}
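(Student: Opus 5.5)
The plan is to build the required soft neighbourhood of $x$ directly from the soft Hausdorff property, using finiteness of $G$ to keep the construction inside $\tau_X$. The slice hypothesis will only be used to obtain freeness of the action.

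\emph{Step 1 (freeness).} Fix $x\in X$ and $g\neq g_0$. Pick any $e\in E$. Since the action on $X_e$ is properly discontinuous, there is an open $V\ni x$ in $X_e$ with $gV\cap V=\emptyset$. If $g\cdot x=x$, then $x\in gV\cap V$, which is impossible. Hence $g\cdot x\neq x$ for every $g\neq g_0$.

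\emph{Step 2 (separation).} For each $g\neq g_0$, the points $x$ and $g\cdot x$ are distinct. Soft Hausdorffness therefore gives $(V_g,E),(W_g,E)\in\tau_X$ with
\[
x\,\widetilde{\in}\,(V_g,E),\qquad g\cdot x\,\widetilde{\in}\,(W_g,E),\qquad (V_g,E)\sqcap(W_g,E)=(0,E).
\]
Next I would show that $g^{-1}\cdot(W_g,E)=\phi_{\alpha_{g^{-1}},\id_E}(W_g,E)$ is soft open. Slicewise it equals $\alpha_g^{-1}(W_g(e))$, so it is the inverse soft image $\phi^{-1}_{\alpha_g,\id_E}(W_g,E)$. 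This is soft open because $\alpha_g$ is soft $\id_E$-continuous (Remark~\ref{softcontsofthome}). Moreover $x\,\widetilde{\in}\,g^{-1}\cdot(W_g,E)$, since $g\cdot x\in W_g(e)$ for every $e$.

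\emph{Step 3 (the neighbourhood).} Define
\[
(U,E)=\bigsqcap_{g\neq g_0}\Bigl((V_g,E)\sqcap g^{-1}\cdot(W_g,E)\Bigr).
\]
Because $G$ is finite, this is a finite intersection of soft open sets, so it lies in $\tau_X$. Also $x\,\widetilde{\in}\,(U,E)$. For $g\neq g_0$ and each $e\in E$, we have $U(e)\subseteq V_g(e)$ and $g\cdot U(e)\subseteq g\cdot g^{-1}W_g(e)=W_g(e)$. Hence
\[
\bigl(g\cdot U\bigr)(e)\cap U(e)\subseteq W_g(e)\cap V_g(e)=\emptyset,
\]
so $g\cdot(U,E)\sqcap(U,E)=(0,E)$. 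Since $x$ was arbitrary, the soft action is properly soft discontinuous.

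The only delicate point is Step 2: identifying $g^{-1}\cdot(W_g,E)$ with an inverse soft image, so that it is soft open, and checking the slicewise inclusion $g\cdot U(e)\subseteq W_g(e)$. Both follow from bijectivity of $\alpha_g$ with inverse $\alpha_{g^{-1}}$. The finiteness of $G$ is what makes the intersection in Step 3 finite, and hence soft open.
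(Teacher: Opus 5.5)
Your proof is correct and follows essentially the same route as the paper: freeness from the slice hypothesis, soft Hausdorff separation of $x$ and $g\cdot x$, pulling back along the soft $\id_E$-continuous map $\alpha_g$ to obtain a neighbourhood disjoint from its $g$-translate, and then intersecting over the finitely many $g\neq g_0$. The only difference is that you write this neighbourhood explicitly as $(V_g,E)\sqcap g^{-1}\cdot(W_g,E)$, whereas the paper simply asserts that such an $(O_g,E)$ exists.
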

\begin{proof}
Fix $x\in X$. Since the induced action on each slice $X_e$ is properly discontinuous, the action is free. As $(X,\tau_X,E)$ is soft Hausdorff, for each $g\in G\setminus\{g_{0}\}$ there exist disjoint soft open sets $(F_g,E)$ and $(F'_g,E)$ over $X$ such that $x\tilde{\in}(F_g,E)$ and $g\cdot x\tilde{\in}(F'_g,E)$. Since each action map $\alpha_g(x)=g\cdot x$ is a soft $\id_E$–continuous, it follows that there exists a soft open set $(O_g,E)$ over $X$ with $x\tilde{\in}(O_g,E)\sqsubseteq(F_g,E) \quad\text{and}\quad g\cdot(O_g,E)\sqsubseteq(F'_g,E).$ Thus, $(O_g,E)\sqcap g\cdot(O_g,E)=(0,E)$. We now consider $(U,E)=\sqcap_{g\in G\setminus\{e\}}(O_g,E)$. Clearly $x\widetilde{\in} (U,E)$ and $G$ being finite, $(U,E)$ is a soft open over $X$. For $h\in G\setminus\{g_{0}\}$ we have, 
\[
(U,E)\sqcap h\cdot(U,E)\sqsubseteq (O_h,E)\sqcap h\cdot(O_h,E)=(0,E).
\]
This shows that the soft action of $G$ on $(X,\tau_X,E)$ is properly discontinuous.
\end{proof}
\begin{remark}
Example~\ref{ex:counter_proper_discontinuous} shows that the finiteness of $G$ in Theorem~\ref{conversetrue} is essential.
\end{remark}

\section{Soft Covering Spaces and Their Connection to Soft Group Actions}\label{sec:coversoft}
In \cite[Definition~3.1]{saleem2024soft}, the notion of \emph{soft covering map} is defined as a map between collections of soft sets rather than between soft topological spaces. As a consequence, a soft open set appears as a single element of the domain, making the requirement that the restriction be a soft homeomorphism vacuous and unable to reflect the local topological structure. Therefore in this section we define the notion of soft evenly covered neighborhoods and soft covering map.
\begin{definition} \label{def:evenly-covered}
Let $(X, \tau_X, A)$ and $(Y, \tau_Y, B)$ be soft topological spaces, and let $(p,e): (X, \tau_X, A) \\\rightarrow (Y, \tau_Y, B)$ be a soft surjective continuous map. A soft open set $(G,B)$ over $Y$ is \emph{soft evenly covered} by $(p,e)$ if its inverse soft image $\phi_{p,e}^{-1}(G,B)$ can be expressed as a disjoint soft union of a collection of soft open sets $\{(F_\alpha,A)\}_{\alpha\in\Lambda}$ over $X$, such that for each $\alpha \in \Lambda$ and $a \in A$, the restricted map $p\big|_{F_\alpha(a)} \colon F_\alpha(a) \to G(e(a))$ is a homeomorphism with respect to the subspace topologies induced by $X_a$ and $Y_{e(a)}$, respectively.
\end{definition}
\begin{definition}\label{softcov}
A soft map $(p,e) : (X, \tau_X, A) \to (Y, \tau_Y, B)$ is a \emph{soft covering map} if it is soft surjective, soft continuous, and soft open, and if each point $y \in Y$ has a soft open neighborhood $(G, B)$ that is soft evenly covered by $(p,e)$. In this case, $(X, \tau_X, A)$ is referred to as a \emph{soft covering space} of $(Y, \tau_Y, B)$.
\end{definition}
\begin{remark}
In Definition~\ref{softcov}, the soft $e$-open condition is necessary, as the other three conditions do not guarantee openness, see Example~\ref{ex:not-soft-open}.
\end{remark}
\begin{example}\label{ex:not-soft-open}
Let $X=Y=\{0,1\}$ and $A=B=\{e_1,e_2\}$ with $p=\id_X$ and $e=\id_A$. Equip $X$ with the discrete soft topology $\tau^{discrete}_X$ and $Y$ with the diagonal soft topology $\tau^{\Delta}_Y$. Clearly, $(p,e)$ is soft continuous and soft surjective as well. Fix $y\in Y$ and consider the diagonal soft open set $(G,B)$ defined by $G(e_1)=G(e_2)=\{y\}$. Since $p$ and $e$ are both identity maps, so $\phi^{-1}_{p,e}(G,B)=(G,B)$. Each slice restriction $p|_{\{y\}}:\{y\}\to\{y\}$ is the identity homeomorphism, hence $(G,B)$ is soft evenly covered. Thus every point of $Y$ admits a soft evenly covered neighborhood. But, $p$ is not soft $e$-open map. Indeed, let $(F,A)$ be the soft open set over $X$ given by $F(e_1)=\{1\}$ and $F(e_2)=\emptyset$. Its soft image $(G,B)$ satisfies $G(e_1)=\{1\}$ and $G(e_2)=\emptyset$, so $(G,B)\notin\tau^{\Delta}_Y$.
\end{example}
The following theorem shows that a properly soft discontinuous action induces a soft covering map via the canonical projection onto the orbit space.
\begin{theorem}\label{properdiscontimplycovering}
Let $(X, \tau_X, E)$ be a soft topological space equipped with a soft action of a group $G$. If the action is properly soft discontinuous, then the canonical projection $(\pi, \id_E) : (X, \tau_X, E) \to (X/G, \tau_{\pi}, E)$ is a soft covering map.
\end{theorem}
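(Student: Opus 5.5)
We have to verify the four conditions of Definition~\ref{softcov} for $(\pi,\id_E)$. Soft surjectivity is immediate, since $\pi$ and $\id_E$ are both surjective. Soft continuity holds by construction of $\tau_\pi$ (Theorem~\ref{prop3.3}). Soft openness is exactly Theorem~\ref{prop:orbit-soft-open}. So the whole content lies in producing, for each orbit $y=\pi(x)\in X/G$, a soft open neighbourhood of $y$ that is soft evenly covered.

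Fix $x$ with $\pi(x)=y$. Let $(U,E)$ be the soft open set supplied by proper soft discontinuity, so that $x\widetilde{\in}(U,E)$ and $g\cdot(U,E)\sqcap(U,E)=(0,E)$ for $g\neq g_0$. Put $(V,E)=\phi_{\pi,\id_E}(U,E)$, so $V(e)=\pi(U(e))$.
\begin{itemize}
\item By Theorem~\ref{prop:orbit-soft-open}, $(V,E)$ is soft open, and $y\widetilde{\in}(V,E)$ because $x\in U(e)$ for every $e$.
\item As in the proof of Theorem~\ref{prop:orbit-soft-open}, $\phi^{-1}_{\pi,\id_E}(V,E)=\bigsqcup_{g\in G} g\cdot(U,E)$, and each $g\cdot(U,E)$ is soft open by Remark~\ref{softcontsofthome}.
\item Disjointness: if $g\cdot U(e)\cap h\cdot U(e)\neq\emptyset$, apply $\alpha_{h^{-1}}$ to get $h^{-1}g\cdot U(e)\cap U(e)\neq\emptyset$. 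The hypothesis then forces $h^{-1}g=g_0$, i.e.\ $g=h$. This works slice by slice, since soft intersection is computed slicewise.
\end{itemize}

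It remains to show that for each $g$ and $e$, the map $\pi|_{gU(e)}\colon gU(e)\to V(e)$ is a homeomorphism for the subspace topologies from $X_e$ and $(X/G)_e$.
\begin{itemize}
\item \emph{Surjectivity:} $\pi(gU(e))=\pi(U(e))=V(e)$.
\item \emph{Injectivity:} if $\pi(gu)=\pi(gu')$ with $u,u'\in U(e)$, then $u'=ku$ for some $k$, so $kU(e)\cap U(e)\ni u'$ and hence $k=g_0$.
\item \emph{Continuity:} the slice map $\pi_e\colon X_e\to(X/G)_e$ is continuous, because $\pi$ is soft continuous and $\pi_e^{-1}(W(e))=\phi^{-1}_{\pi,\id_E}(W,E)(e)$.
\item \emph{Openness:} the slice map is open by Remark~\ref{softopenusualopen} applied to Theorem~\ref{prop:orbit-soft-open}. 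Since $gU(e)$ is open in $X_e$, opens of $gU(e)$ are open in $X_e$, so their images are open in $(X/G)_e$ and therefore in $V(e)$.
\end{itemize}
A continuous open bijection is a homeomorphism, so this completes the evenly covered condition.

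The main obstacle is the slice-level homeomorphism, specifically the step that the slice of the orbit projection is open with respect to the slice topology $(\tau_\pi)_e$. That topology may be coarser than the classical quotient topology, since $\tau_X$ need not be slice-determined. I would handle this through Remark~\ref{softopenusualopen} rather than through any slice-quotient identification. Concretely: an open $W\subseteq X_e$ arises as $F(e)$ for some $(F,E)\in\tau_X$, and then $\pi(F(e))=\phi_{\pi,\id_E}(F,E)(e)$ is a slice of a soft open set, hence lies in $(\tau_\pi)_e$.
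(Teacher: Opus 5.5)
Your proposal is correct and follows essentially the same route as the paper: the same neighbourhood $\phi_{\pi,\id_E}(U,E)$, the decomposition of its inverse image as $\bigsqcup_{g\in G} g\cdot(U,E)$, disjointness via $h^{-1}g$, and a slicewise check that each $\pi|_{gU(e)}$ is a continuous open bijection onto $V(e)$. Your justification of slice-openness is slightly more careful than the paper's bare appeal to Remark~\ref{softopenusualopen}, since you use that $gU(e)$ is open in $X_e$ and that the parametric map is $\id_E$; the argument is otherwise the same.
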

\begin{proof}
We note that it suffices to verify that, for each element of $X/G$, there exists a soft open set evenly covered by $(p,e)$, in view of Theorem~\ref{prop:orbit-soft-open}. Let $y \in X/G$ be an arbitrary point, and Choose a point $x \in X$ such that $\pi(x) = y$. Since the action of $G$ is properly soft discontinuous, there exists a soft open set $(F, E)$ over $X$ such that $x \tilde{\in} (F, E)$ and $g \cdot (F, E) \sqcap (F, E) = (0, E) \quad \text{for all } g \neq g_{0}.$ Set $(H, E) = \phi_{\pi, \id_E}(F, E)$. By Theorem~\ref{prop:orbit-soft-open}, the projection $\pi$ is a soft $\id_{E}$-open map, and hence $(H, E)$ is soft open over $X/G$. Since $x \tilde{\in} (F, E)$, we have $x \in F(e)$ for all $e \in E$, which implies $y = \pi(x) \in H(e)$ for all $e \in E$. Thus $y \tilde{\in} (H, E)$. It remains to show that $(H, E)$ is soft evenly covered by $(\pi, \id_E)$. We note that 
\[
\phi_{\pi, \id_E}^{-1}(H, E) = \bigsqcup_{g \in G} g \cdot (F, E),
\]
which is soft open over $X$, since each $\alpha_g$ is a soft $\id_{E}$ homeomorphism. We claim that $g_{1}\cdot (F, E) \sqcap g_{2}\cdot (F, E) = (0, E)$ whenever $g_{1} \neq g_{2}$. Suppose this is not the case. Then there exist $e_{0} \in E$ and $x_{0} \in X$ such that, $x_{0} \in g_{1}\cdot F(e_{0}) \cap g_{2}\cdot F(e_{0}).$ Equivalently, $g_{2}^{-1} g_{1}\cdot (F, E) \sqcap (F, E) \neq (0, E)$. Since the soft action of $G$ is properly soft discontinuous, we have $g_{1} = g_{2}$, which is a contradiction. We show that, for each $g \in G$ and $e \in E$, the restriction $\pi|_{g\cdot F(e)} \colon g\cdot F(e) \to H(e)$ is a homeomorphism onto the slice. By construction, the map is surjective. To verify injectivity, let $z_{1}, z_{2} \in g\cdot F(e)$ with $\pi(z_{1}) = \pi(z_{2})$. Then $z_{2} = h \cdot z_{1}$ for some $h \in G$. Since both points lie in $g\cdot F(e)$, it follows that $g^{-1} z_{1} \in F(e)$ and $(g^{-1} h g)\cdot (g^{-1} z_{1}) \in F(e)$. As $(F, E)$ is disjoint from $g'\cdot (F, E)$ for $g' \neq g_{0}$, we obtain $g^{-1} h g = g_{0}$, hence $h = g_{0}$ and $z_{1} = z_{2}$. Since $\pi$ is soft $\id_{E}$-continuous and soft $\id_{E}$-open map, its restriction to each slice $g\cdot F(e)$ is an open by Remark~\ref{softopenusualopen}. Therefore, $\pi|_{g\cdot F(e)}$ is a homeomorphism. Thus, $(H, E)$ is a soft evenly covered neighborhood of $y$. Hence, $(\pi, \id_E)$ is a soft covering map.
\end{proof}
The converse of Theorem~\ref{properdiscontimplycovering} does not hold in general, as shown by the following example.
\begin{example}
Let $(X,\tau_X,E)$ be a soft topological space, and let $G$ be a nontrivial group acting trivially on $X$. Then each orbit is a singleton, and hence $X/G$ is canonically identified with $X$. Under this identification, the orbit projection $(\pi,\id_E)\colon (X,\tau_X,E)\rightarrow (X/G,\tau_\pi,E)$ is the identity map. Consequently, $(\pi,\id_E)$ is a soft $\id_E$-homeomorphism, and in particular a soft covering map. For any non-identity element $g\in G$ and any $(F,E)\in\tau_X$, one has $g\cdot (F,E)=(F,E)$, and therefore 
\[
g\cdot (F,E)\sqcap (F,E)=(F,E)\neq (0,E).
\]
Hence the action of $G$ is not properly soft discontinuous.
\end{example}
Although the converse of the Theorem~\ref{properdiscontimplycovering} fails in general, it holds under additional hypothesis, as shown in the following theorem.
\begin{theorem}\label{thm6.0.7}
Let $(X,\tau_X,E)$ be a soft topological space equipped with a soft action of a group $G$ and suppose that the canonical projection $(\pi,\id_E):(X,\tau_X,E)\rightarrow (X/G,\tau_\pi,E)$ is a soft covering map. If the action of $G$ is free, then it is properly soft discontinuous.
\end{theorem}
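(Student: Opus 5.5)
Fix $x\in X$ and set $y=\pi(x)$. Since $(\pi,\id_E)$ is a soft covering map, $y$ has a soft evenly covered soft open neighbourhood $(H,E)$ with $y\widetilde{\in}(H,E)$. Then $\phi^{-1}_{\pi,\id_E}(H,E)=\bigsqcup_{\alpha\in\Lambda}(F_\alpha,E)$ is a disjoint soft union of soft open sets. For each $\alpha$ and $e$, the restriction $\pi|_{F_\alpha(e)}\colon F_\alpha(e)\to H(e)$ is a homeomorphism, and in particular a bijection. Because $x\in\pi^{-1}(H(e))$ for every $e$, for each $e$ there is a unique index $\alpha(e)$ with $x\in F_{\alpha(e)}(e)$. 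The first difficulty is that $\alpha(e)$ may depend on $e$, so no single sheet need contain $x$ in the soft sense.

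I would avoid choosing a single sheet and instead define the soft set $(U,E)$ slicewise by $U(e):=F_{\alpha(e)}(e)$. The main obstacle is showing that $(U,E)$ is soft open. My first attempt is to write
\[
(U,E)=\phi^{-1}_{\pi,\id_E}(H,E)\sqcap (W,E),
\]
with $(W,E)$ a suitable soft open set. If instead the sheets are forced to agree across parameters (for example, if one may take $\alpha$ independent of $e$), then $U=F_\alpha$ is soft open immediately. Failing that, I would enlarge or refine the indexing. The idea is to regroup the decomposition so that each new sheet is the soft union, taken slicewise, of the sheets through a fixed orbit-translate $g\cdot x$. Using freeness, the map $g\mapsto g\cdot x$ labels the points of $\pi^{-1}(y)$, and $\pi^{-1}(y)\cap F_\alpha(e)$ is a single point. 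Define $(V_g,E)$ by letting $V_g(e)$ be the sheet over $e$ that contains $g\cdot x$. Then
\[
\phi^{-1}_{\pi,\id_E}(H,E)=\bigsqcup_g (V_g,E)
\]
is again a disjoint decomposition, and $(U,E)=(V_{g_0},E)$. Proving that $(V_{g_0},E)$ is soft open is the crux. If it does not follow from soft openness of the $(F_\alpha,E)$ together with the soft union axiom, I would add the mild hypothesis that $\tau_X$ is slice-determined. Under that hypothesis it holds at once, since each $U(e)$ is open in $X_e$.

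Given that $(U,E)$ is soft open with $x\widetilde{\in}(U,E)$, I would show $g\cdot(U,E)\sqcap(U,E)=(0,E)$ for $g\neq g_0$. Suppose $z\in U(e)\cap g\cdot U(e)$. Then $z=g\cdot w$ with $w\in U(e)$, so $\pi(z)=\pi(w)$ with both $z,w\in U(e)$. Injectivity of $\pi$ on the sheet $U(e)$ gives $z=w$, hence $g\cdot w=w$. Freeness of the action then gives $g=g_0$, a contradiction. This step is routine and uses only the bijectivity of $\pi|_{F_\alpha(e)}$ and freeness. Hence the action is properly soft discontinuous.
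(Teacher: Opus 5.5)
\textbf{The gap.} You correctly identified the weak point, but soft openness of the slicewise-assembled set $U(e)=F_{\alpha(e)}(e)$ is never established. Your regrouping into the $(V_g,E)$ does not help, because $(V_{g_0},E)=(U,E)$ is the same set. Falling back on slice-determinedness of $\tau_X$ proves a different theorem. This gap cannot be closed, because the statement as written is false.

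\textbf{Counterexample.} Take $X=\{0,1\}$, $E=\{e_1,e_2\}$, and let $G=\mathbb{Z}/2=\{g_0,s\}$ act freely by the swap. Let $\tau_X=\{(0,E),(F_1,E),(F_2,E),(1,E)\}$ with $F_1(e_1)=F_2(e_2)=\{0\}$ and $F_1(e_2)=F_2(e_1)=\{1\}$.
\begin{enumerate}
\item This is a soft topology, and $\phi^{-1}_{\alpha_s,\id_E}$ interchanges $(F_1,E)$ and $(F_2,E)$. So this is a soft action.
\item $X/G$ is a point and $\tau_\pi=\{(0,E),(1,E)\}$. The map $(\pi,\id_E)$ is soft surjective and soft continuous. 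It is also soft open, since $(F_1,E)$, $(F_2,E)$ and $(1,E)$ all have image $(1,E)$.
\item $(1,E)$ is soft evenly covered: $\phi^{-1}_{\pi,\id_E}(1,E)=(F_1,E)\sqcup(F_2,E)$ is a disjoint soft union of soft open sets, and each slice $F_\alpha(e)$ maps bijectively onto the one-point slice. Hence $(\pi,\id_E)$ is a soft covering map.
\item The only soft open set with $0\,\widetilde{\in}\,(U,E)$ is $(1,E)$, and $s\cdot(1,E)\sqcap(1,E)=(1,E)\neq(0,E)$. So the action is not properly soft discontinuous.
\end{enumerate}
In your notation, $(U,E)=(V_{g_0},E)=\widetilde{\{0\}}$ and $(V_s,E)=\widetilde{\{1\}}$, and neither lies in $\tau_X$.

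\textbf{Comparison with the paper.} The paper's proof has the same flaw. It simply says to choose $\alpha_0$ with $x\,\widetilde{\in}\,(F_{\alpha_0},E)$, which is exactly the step you flagged as unjustified. In the example above, neither $(F_1,E)$ nor $(F_2,E)$ softly contains $0$.

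\textbf{What your argument does prove.} It correctly proves a repaired statement: assume $\tau_X$ is slice-determined, or more generally that for each $x$ the set $(U,E)$ is soft open (for instance, when a single sheet softly contains $x$). Then $(U,E)$ is a soft open set with $x\,\widetilde{\in}\,(U,E)$. Your closing injectivity-plus-freeness step, which coincides with the paper's final step, then gives $g\cdot(U,E)\sqcap(U,E)=(0,E)$ for all $g\neq g_0$.

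You should present your result as a theorem under that extra hypothesis. Accompany it with the counterexample showing the hypothesis cannot simply be dropped, rather than offering it as a proof of the stated result.
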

\begin{proof}
    Fix $x\in X$ and set $y:=\pi(x)\in X/G$. Since $(\pi,\id_E)$ is a soft covering map, it follows that $y$ admits a soft evenly covered neighborhood $(H,E)$ for which there exists a collection $\{(F_\alpha,E)\}_{\alpha\in\Lambda}$ of soft open sets over $X$ whose elements are whose disjoint soft union is $\phi^{-1}_{\pi,\id_E}(H,E)$ and such that for all $\alpha\in\Lambda$ and $a\in E$ the restrictions $\pi\big|_{F_\alpha(a)}\colon F_\alpha(a)\to H(a)$ are homeomorphisms. Now we choose $\alpha_0\in \Lambda$ such that with $x\tilde{\in} (F_{\alpha_0},E)$. Let $g\in G\setminus\{g_{0}\}$. If $g\cdot (F_{\alpha_0},E)\ \sqcap\ (F_{\alpha_0},E) \neq (0,E)$, then there exists $g\in G$ such that $g\cdot (F_{\alpha_0},E)\ \sqcap\ (F_{\alpha_0},E)\neq(0,E)$ and $g\neq g_{0}$. This implies that $u_2=g\cdot u_1$, where $u_1,u_2\in F_{\alpha_0}(a)$ for some $a\in E$. Thus $\pi(u_1)=\pi(u_2)$. Since $\pi\vert_{F_{\alpha_0}(a)}$ is injective, it follows that $u_1=u_2=g\cdot u_1$, which contradicts the assumption that the action of $G$ is free. Hence $g \cdot (F_{\alpha_0},E) \sqcap (F_{\alpha_0},E) = (0,E)$ for all $g \neq g_{0}$, and so the action of $G$ is properly soft discontinuous.
\end{proof}

\section{Application: Multi-Agent Motion Planning under Parametric Uncertainty}\label{application}
In algorithmic robotics, planning collision-free trajectories for multi-agent systems, such as aerial swarms or automated vehicles, is fundamentally constrained by exponential expansion of the joint configuration space with each additional agent \cite{lavalle2006planning}. While existing methodologies, including the quotient-space road map planner \cite{orthey2018quotient} and normal parameter reduction \cite{kong2008normal}, effectively exploit symmetries or reduce parameter dimensionality, they lack a rigorous topological framework to ensure the lossless preservation of collision-free regions under parametric uncertainty arising from dynamic environmental conditions, such as fluctuating weather or sensor conditions \cite{zhang2004quotient}. In this section, we illustrate a potential application of our framework, which provides a mathematically rigorous and combinatorial reduction of such configuration spaces.
\subsection{Mathematical Formulation and Combinatorial Reduction}
Consider $N$ identical agents navigating a workspace graph $V$. The classical collision-free configuration space is $X = V^N \setminus \Delta$, where the diagonal $\Delta=\{(x_1, \dots, x_N) \in V^N \mid x_i = x_j \text{ for some } i \neq j\}$ represents physical collisions \cite{kong2008normal}. The size of this search space, $|X| = \frac{|V|!}{(|V|-N)!}$. Due to the indistinguishability of the agents, the configuration space $X$ admits permutation symmetry. This symmetry is captured by the symmetric group $S_N$, which acts on $X$ by permuting the agent coordinates. To mathematically capture dynamically fluctuating collision-free regions due to external factors, we utilize soft sets. Let $E$ be a parameter set representing varying environmental conditions (for example ``Clear Weather'', ``Heavy Fog'' etc.). For each $e \in E$, let $V_e \subseteq V$ denote the subset of collision-free nodes. The parameterized collision-free space is modelled as a soft open set $(F,E) \in \tau_X$: 
\[
F(e) = \{(x_1, \dots, x_N) \in X \mid x_i \in V_e \text{ for all } i\}.\] However, as $N$ increases, the combinatorial growth of $F(e)$ induces a severe computational burden on standard algorithms. To bypass this computational barrier, we project the parameterized collision-free space $(F, E)$ onto the soft orbit space $X/S_N$. The canonical projection $\pi(x_1, \dots, x_N) = \{x_1,\dots,x_N\}$ maps an ordered tuple of configurations to an unordered set of agent positions, yielding the deeply reduced soft open set $(H,E) = \pi(F,E)$. For a fixed parameter $e$, the number of functionally distinct collision-free states decreases from the permutation count to $|H(e)| = \binom{|V_e|}{N} = \frac{|V_e|!}{N!(|V_e|-N)!}$.

This reduction shrinks the search space by a factor of $N!$. By operating directly within the soft orbit space, fundamental graph search algorithms, such as A* algorithm \cite{hart1968formal} and Dijkstra's algorithm \cite{dijkstra1959note} bypass the redundant computation, significantly accelerating trajectory generation.
\subsection{Theoretical Guarantee}
Compressing configuration spaces risks creating false pathways where disjoint collision-free regions appear connected, causing algorithms to blindly navigate into forbidden regions. Our framework mathematically eliminates this computational risk. Since the action of the symmetric group $S_N$ on the collision-free configuration space $X$ is free, Theorem~\ref{thm6.0.7} implies that the action is properly soft discontinuous; this, in turn, ensures by Theorem~\ref{properdiscontimplycovering} that the canonical projection $\pi$ is precisely a soft covering map. Consequently, every state in the compressed space $X/S_{N}$ possesses a soft evenly covered neighbourhood. This guarantees that for each parameter $e \in E$, local topological structures are preserved. Thus, any trajectory planned within the compressed slice $H(e)$ admits a unique lift to the corresponding unreduced configuration space, thereby eliminating topological distortion and spurious collisions.

\section{Conclusion}
The framework established in this paper provides a rigorous foundation for further developments in soft algebraic topology. A primary direction for future research is the formulation of Soft CW complexes, which, combined with existing work on the soft fundamental group and singular homology \cite{BahredarKouhestaniPassandidehFundamentalGroup,singularhomology}, will lead to a comprehensive theory of higher soft homotopy groups. Beyond these theoretical advancements, the symmetry reduction demonstrated in Section~\ref{application} highlights the potential application of soft topological methods to address computational challenges, such as multi-agent motion planning under parametric uncertainty. Future research could explore extending these soft quotient and covering frameworks to soft topologies over fuzzy sets, further broadening their applicability. Furthermore, we aim to explore the applicability of our framework to topological data analysis.

\begin{acknow}
The first author is grateful for financial support in the form of Prime Minister’s Research Fellowship, Government of India (PMRF/2502403). The second author was supported by the Centre for Operator Algebras, Geometry, Matter and Spacetime, Ministry of Education, Government of India through Indian Institute of Technology Madras (Project no. SB22231267MAETWO008573).
\end{acknow}
\bibliographystyle{amsplain}  
\bibliography{reference}
\end{document}